\newtheorem{theorem}{Theorem}
\newtheorem{definition}{Definition}
\newtheorem*{theorem*}{Theorem}
\newtheorem{lemma}[theorem]{Lemma}
\newtheorem{propo}{Proposition}[section]
\theoremstyle{definition}
\theoremstyle{remark}
\theoremstyle{remark}
\newtheorem{exam}[propo]{Example}
\newcommand\R{\mathbb{R}}
\newcommand\N{\mathbb{N}}
\newcommand\HH{\mathbb{H}}
\newcommand\Sp{\mathbb{S}}
\newcommand\TT{\mathcal{T}}
\newcommand\MM{\mathscr{M}}
\newcommand\ti[1]{\widetilde{#1}}
\newcommand\tq{\; | \;}
\newcommand\Isom{\mathrm{Isom}}
\newcommand{\cat}[1]{\mathrm{CAT}\left( #1 \right)}
\newcommand{\Eq}[1]{\mathrm{Eq}\left( #1 \right)}
\newcommand{\ps}[1]{\left< #1 \right>}
\DeclarePairedDelimiter\abs{\lvert}{\rvert}
\newcommand\Lip{\mathrm{Lip}}
\newcommand\En{\mathcal{E}}
\title[Dominating surface group representations]{Dominating $ \cat{-1} $ surface group representations
by Fuchsian ones}
\author{Florestan {Martin-Baillon}}
\begin{document}

\maketitle

% \listoftodos

\begin{abstract}
	We show
	that for every representation
	$ \rho : \pi_{1} (S_{g}) \to \Isom(X) $ 
	of the fundamental group
	of a genus $ g \ge 2 $ surface
	to the isometry group of
	a complete
	$ \cat{-1} $ metric space $ X $
	there exists
	a Fuchsian representation $ j $
	and
	a $ (j, \rho) $-equivariant
	map from $ \HH^{2} $ to $ X $
	which is $ c $ -Lipschitz
	for some $ c < 1 $,
	or $ \rho $ restricts to
	a Fuchsian representation.
\end{abstract}

\tableofcontents

\section{Introduction}

Let $ S $ be a closed surface of genus $ g \ge 2 $.
We denote its fundamental group by $ \Gamma $.
Let $ X_{1} $ and $ X_{2} $ be metric spaces.
For two representations $ \rho_{i} : \Gamma \to \Isom(X_{i}) $
$ (i=1,2) $, we say that $ \rho_{1} $ dominates $ \rho_{2} $
if there exists a $1$-Lipschitz and $ (\rho_{1}, \rho_{2}) $-equivariant
map from $ X_{1} $ to $ X_{2} $.
Such a map is called a domination.
We say that $ \rho_{1} $ dominates strictly $ \rho_{2} $ if
there exists a map which is 
$ (\rho_{1}, \rho_{2}) $-equivariant
and
$ c $-Lipschitz for
a $ c < 1 $.
We say that a representation $ j : \Gamma \to \Isom(\HH^{2}) $ is
Fuchsian if it is the holonomy of a hyperbolic structure on $ S $.

\begin{theorem}
	\label{th:main}
	Let $ X $ be a $ \cat{-1} $ complete metric space
	and $ {\rho: \Gamma \to \Isom(X)} $ a representation.
	Then there exists a Fuchsian representation $ j $
	which dominates $ \rho $.
	Moreover, either $ j $ dominates strictly $ \rho $
	or the domination is an isometric embedding.
\end{theorem}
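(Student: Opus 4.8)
The plan is to realize the domination as a branched minimal immersion and then conclude with the Ahlfors--Schwarz lemma. First I would dispose of the \emph{elementary} representations, those whose limit set in $\partial_\infty X$ has at most two points. If $\rho$ fixes a point of $X$, the constant map at that point strictly dominates $\rho$ from every Fuchsian $j$; if $\rho$ preserves a geodesic line or fixes a point of $\partial_\infty X$, then post-composing with the $1$-Lipschitz nearest-point projection onto that line (respectively with a Busemann function) reduces the problem to an isometric action on a copy of $\R$ in $X$, which is then strictly dominated by hand. In all of these cases an equivariant isometric embedding $\HH^2 \hookrightarrow X$ is impossible, so one lands in the ``strict'' alternative, consistently with the statement. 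Henceforth $\rho$ is non-elementary.

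For such $\rho$ I would run a variational argument over the Teichmüller space $\TT(S)$. For $\sigma \in \TT(S)$, let $j_\sigma : \Gamma \to \Isom(\HH^2)$ be its Fuchsian holonomy. Since $X$ is complete $\cat{-1}$, hence $\cat{0}$, and $\rho$ is non-elementary, the Korevaar--Schoen theory of equivariant harmonic maps into metric spaces shows that the infimum of energy over $(j_\sigma,\rho)$-equivariant maps $\HH^2 \to X$ is achieved, by a harmonic map $f_\sigma$ that is moreover unique since the curvature is strictly negative; let $E(\sigma) \ge 0$ be this energy (computed over a fundamental domain, equivalently over $S$). The heart of the argument is that $E : \TT(S) \to \R$ is \emph{proper}: along a family $\sigma_n$ leaving every compact subset of $\TT(S)$ a simple closed curve gets pinched, and a Sacks--Uhlenbeck-type bubbling analysis, adapted to $\cat{-1}$ targets, shows that bounded energy would produce a nontrivial harmonic object on the pinched part whose existence contradicts the non-elementarity of $\rho$. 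Granting properness, $E$ attains a minimum at an interior point $\sigma_0 \in \TT(S)$. By a standard first variation computation (Tromba, Wolf; Korevaar--Schoen for $\cat{0}$ targets) the differential of $E$ at $\sigma_0$ is the pairing against the Hopf differential of $f_0 := f_{\sigma_0}$, a holomorphic quadratic differential on $(S,\sigma_0)$; at the minimum it vanishes, so $f_0$ is weakly conformal. A conformal harmonic map of a surface is a branched minimal immersion: away from its finitely many branch points the pull-back metric $h := f_0^{*}(\cdot)$ is a conformal metric on $\HH^2$ whose Gauss curvature satisfies $K_h \le -1$, by the Gauss equation for a minimal surface in a space of curvature $\le -1$. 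In the singular target this inequality has to be read synthetically — the image of $f_0$ with its intrinsic length metric is $\cat{-1}$ — which one extracts from a majorization argument.

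It remains to compare $h$ with the $j_{\sigma_0}$-equivariant hyperbolic metric $g_{\sigma_0}$ of $\HH^2$, to which it is conformal: writing $h = \varphi\, g_{\sigma_0}$ with $\varphi \ge 0$, vanishing exactly at the branch points, the Ahlfors--Schwarz lemma — equivalently the maximum principle on the compact surface $S$ applied to $\log \varphi$, which satisfies $\Delta_{\sigma_0} \log \varphi \ge 2(\varphi - 1)$ wherever $\varphi > 0$ — gives $\varphi \le 1$. Hence $f_0$ is $1$-Lipschitz and $j_{\sigma_0}$ dominates $\rho$. If $\varphi < 1$ everywhere, then, $\varphi$ descending to the compact surface $S$, it is bounded above by some $c < 1$ and the domination is strict. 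If $\varphi(p_0) = 1$ at some $p_0$, the strong maximum principle forces $\varphi \equiv 1$, so $K_h \equiv -1$, $f_0$ has no branch points, and the equality case of the Gauss equation forces its second fundamental form to vanish identically; thus $f_0$ is a totally geodesic, length-preserving map of $\HH^2$, hence injective in the $\cat{-1}$ space $X$ (a geodesic bigon would violate uniqueness of geodesics), i.e.\ an isometric embedding, and $\rho$ restricts to the Fuchsian representation $j_{\sigma_0}$ on its image.

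The step I expect to be the main obstacle is the properness of $E$ on $\TT(S)$: it is the only place where non-elementarity of $\rho$ is genuinely used, and it rests on a delicate compactness/bubbling analysis for harmonic maps into a possibly non-locally-compact $\cat{-1}$ space. A secondary technical point is the regularity required to run the Gauss-equation step in the singular target — one should phrase the curvature bound and the maximum principle weakly, or bypass them entirely through the synthetic $\cat{-1}$ structure of the image combined with a metric-space version of the Schwarz lemma — together with a removable-singularity argument at the branch points ensuring the $1$-Lipschitz bound persists there.
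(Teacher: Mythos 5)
Your strategy is the analytic route of Deroin--Tholozan and of Daskalopoulos--Mese--Sanders--Vdovina (equivariant harmonic map $\HH^2 \to X$, conformality at an energy minimum over Teichm\"uller space, Gauss equation, Ahlfors--Schwarz), and it is not the route the paper takes. The gap is that the steps you defer to ``Korevaar--Schoen theory'' are precisely the ones that are not available in the stated generality. Existence of an energy-minimizing $(j_\sigma,\rho)$-equivariant map $\HH^2 \to X$ for a \emph{general} complete $\cat{-1}$ space $X$ (possibly non-locally-compact, e.g.\ an infinite-valence tree or an infinite-dimensional hyperbolic space) and an \emph{arbitrary} representation $\rho$ is exactly the obstruction that forced Daskalopoulos--Mese--Sanders--Vdovina to assume $\rho$ convex cocompact; without such a hypothesis neither the existence of $f_\sigma$, nor the properness of $E$ on $\TT(S)$, nor the regularity needed to define a Hopf differential, a pull-back metric of curvature $\le -1$, or a second fundamental form in a singular target, is established, and your uniqueness claim for $f_\sigma$ already fails for tree targets. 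So the proposal reduces the theorem to unproven (and, in this generality, genuinely open) analytic inputs rather than proving it.

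The paper sidesteps all of this by discretizing. It fixes a one-vertex triangulation of $S$, proves existence of a \emph{discrete} $\rho$-equivariant harmonic map $F$ on the vertex set via Moreau--Yosida approximation and an evanescence/fixed-point alternative (the case of a fixed point at infinity being handled separately, as in your elementary case), shows via the harmonic critical inequality and Reshetnyak majorization in the space of directions that the cone angles of the induced triangulated hyperbolic cone surface $C_F$ are $\ge 2\pi$, so $C_F$ has curvature $\le -1$ and dominates $\rho$ through comparison maps on each triangle; Minda's strong Ahlfors lemma applied to the (possibly degenerate) conformal cone metric then yields either strict domination by the uniformizing Fuchsian $j$ or equality, and in the equality case a synthetic rigidity argument (equality in the $\cat{-1}$ comparison for each triangle, plus a comparison-polygon argument on $\Sigma_x X$) shows the domination is an isometric embedding. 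If you want to salvage your approach, you would essentially have to reprove the Korevaar--Schoen--Mese package without local compactness and without convex cocompactness; the discrete construction exists precisely to avoid that.
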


Remark that in the second case, $ \rho $ stabilises a subset of
$ X $ which is isometric to $ \HH^{2} $ and in restriction to which
it is Fuchsian.

This theorem is known
in several special cases.
Guéritaud, Kassel and Wolff
\cite{gueritaudCompactAntideSitter2015}
proved it
for $ X = \HH^{2} $,
by showing that
every such $ \rho $
is the holonomy of
a folded hyperbolic structure.
Deroin and Tholozan
\cite{deroinDominatingSurfaceGroup2016}
proved it
for $ X $
a smooth, complete, simply connected
Riemannian manifold of
sectional curvature bounded above
by $ -1 $.
They construct a equivariant harmonic
map (with respect to an arbitrary conformal structure)
and show that one can chose
a hyperbolic structure on $ S $
that make this map 1-Lipschitz.
Daskalopoulos, Mese, Sanders and Vdovina
\cite{daskalopoulosSurfaceGroupsActing2019}
proved it for $ X $
a complete $ \cat{-1} $ metric space
and $ \rho $ convex cocompact.
Their proof relies on harmonic analysis
in singular spaces, as developed
by Koraveer-Schoen
and Mese.
They need the convex cocompact assumption
to construct harmonic conformal equivariant maps.

Our approach to tackle this
problem share an important feature
with the works of
Deroin-Tholozan
and
Daskalopoulos-Mese-Sanders-Vdovina:
we use harmonic maps
to construct domination.
However, we use \emph{discrete}
harmonic maps.
This makes the proof
less technically involved and more combinatorial,
as we don't rely on the
machinery of Sobolev
maps with metric space target.

As observed by 
Deroin-Tholozan
and
Daskalopoulos-Mese-Sanders-Vdovina,
this result gives
another proof of a
particular case of a result
of Bonk-Kleiner
\cite{bonkRigidityQuasiFuchsianActions2004}
(conjectured by Bourdon \cite{bourdonStructureConformeAu1995}):
\begin{theorem}
	[\cite{bonkRigidityQuasiFuchsianActions2004}]
	If $ \rho : \Gamma \to \Isom(X) $ 
	is a convex cocompact isometric acting
	on a $\cat{-1}$ space
	$ X $ 
	then
	the Haussdorf dimension
	of the limit set of $ \rho $
	is $ \ge 1 $,
	with equality
	if and only if
	$ \rho $
	fixes
	a geodesically embedded copy
	of $ \HH^{2} $ in $ X $
	whose induced action is Fuchsian.
\end{theorem}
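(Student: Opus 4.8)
The plan is to deduce this statement from Theorem~\ref{th:main} together with the Patterson--Sullivan description of the Hausdorff dimension of limit sets in $\cat{-1}$ spaces. Fix a point $o\in\HH^{2}$. For a discrete convex cocompact action $\rho$ on a $\cat{-1}$ space, Bourdon's theorem \cite{bourdonStructureConformeAu1995} identifies the Hausdorff dimension of the limit set $\Lambda_{\rho}$, computed for a visual metric on $\partial X$, with the critical exponent
\[
\delta(\rho)=\limsup_{R\to\infty}\frac{1}{R}\log\#\{\gamma\in\Gamma : d_{X}(\rho(\gamma)p,p)\le R\},
\]
which does not depend on the auxiliary point $p$. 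It therefore suffices to prove that $\delta(\rho)\ge 1$, with equality precisely when $\rho$ preserves a geodesically embedded copy of $\HH^{2}$ on which it acts as a Fuchsian group.

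I would first record the behaviour of $\delta$ under domination. Applying Theorem~\ref{th:main}, we obtain a Fuchsian representation $j$ and a $(j,\rho)$-equivariant $c$-Lipschitz map $f\colon\HH^{2}\to X$ with $c\le 1$. Equivariance gives $d_{X}(\rho(\gamma)f(o),f(o))=d_{X}(f(j(\gamma)o),f(o))\le c\,d_{\HH^{2}}(j(\gamma)o,o)$ for all $\gamma\in\Gamma$, so a direct comparison of the associated Poincaré series yields $\delta(\rho)\ge\tfrac1c\,\delta(j)$. Since the image of a Fuchsian representation is a cocompact lattice in $\Isom(\HH^{2})$ we have $\delta(j)=1$, whence $\delta(\rho)\ge 1/c\ge 1$, that is $\dim_{H}\Lambda_{\rho}\ge 1$.

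Now assume $\dim_{H}\Lambda_{\rho}=1$, i.e. $\delta(\rho)=1$. If $j$ were to dominate $\rho$ strictly, the same comparison applied to a $(j,\rho)$-equivariant $c$-Lipschitz map with $c<1$ would give $\delta(\rho)\ge 1/c>1$, a contradiction; hence we are in the remaining case of Theorem~\ref{th:main}, where $f$ is an isometric embedding. Then $Y:=f(\HH^{2})$ is complete, hence closed in $X$, and isometric to $\HH^{2}$; as $\cat{-1}$ spaces are uniquely geodesic, $f$ sends geodesics to geodesics, so $Y$ is totally geodesic. Equivariance of $f$ shows $\rho(\Gamma)$ preserves $Y$, and the induced action is conjugate to $j$, hence Fuchsian. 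Conversely, if $\rho$ preserves a geodesically embedded $Y\cong\HH^{2}$ with Fuchsian induced action, then $\Lambda_{\rho}$ is the full boundary circle $\partial Y$ of $Y$; since $Y$ is totally geodesic, the geodesic rays realising Gromov products of points of $\partial Y$ based at a point of $Y$ stay in $Y$, so the visual metric of $X$ restricts on $\partial Y$ to the visual metric of $\HH^{2}$, and therefore $\dim_{H}\Lambda_{\rho}=\dim_{H}\partial\HH^{2}=1$.

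The only input external to Theorem~\ref{th:main} is Bourdon's identification of Hausdorff dimension with the critical exponent, which is classical in the convex cocompact $\cat{-1}$ setting; granting it, the corollary is a formal consequence. The step I would be most careful about is the converse half of the equality case — checking that the ambient visual metric does not see the embedded $\HH^{2}$ as lower-dimensional than it intrinsically is — which reduces to the totally-geodesic observation above and requires no hard estimate.
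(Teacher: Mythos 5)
The paper does not actually prove this statement: it is quoted as a theorem of Bonk--Kleiner, and the text only remarks (following Deroin--Tholozan and Daskalopoulos--Mese--Sanders--Vdovina) that Theorem~\ref{th:main} yields another proof of this particular case, without writing the deduction out. Your argument supplies exactly that missing deduction, and it is essentially the standard one: Bourdon's identification of the Hausdorff dimension of the limit set of a convex cocompact action on a $\cat{-1}$ space with the critical exponent, the comparison $\delta(\rho)\ge \delta(j)/c$ coming from a $c$-Lipschitz equivariant map, and the rigidity half of Theorem~\ref{th:main} to handle equality. The logic is sound: in particular your observation that an isometric embedding of $\HH^{2}$ into a uniquely geodesic $\cat{-1}$ space has convex (totally geodesic) image is what makes both the ``only if'' direction (the image is geodesically embedded) and the converse (Gromov products of boundary points of $Y$ can be computed inside $Y$, so the visual metric restricts and $\dim_{H}\partial Y=1$) go through. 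Two small points worth making explicit if you write this up: the value ``$1$'' for the Hausdorff dimension presupposes the standard normalization of the visual metric (parameter $e^{-1}$, matching curvature $-1$), under which $\delta(j)=\dim_{H}\partial\HH^{2}=1$ for a cocompact Fuchsian $j$; and the orbit-counting definition of $\delta(\rho)$ and Bourdon's theorem both use the discreteness/convex cocompactness hypothesis, which is indeed assumed in the statement. Note also that the result proved this way is only the surface-group case of Bonk--Kleiner, which is all the paper claims.
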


The relation of domination
has been
studied in order to classify
the compact anti-de Sitter spaces
of dimension 3.
These spaces are classified
by pairs
$ (j, \rho) $
of representations
$ \Gamma \to \Isom(\HH^2) $
which act faithfully and
properly
discontinuously
on $ PSL(2, \R) $,
by simultaneous
multplication on the left
and on the right
(and $ \Gamma $
is the fundamental
group of a surface
of genus $ g \ge 2 $),
by
\cite{goldmanNonstandardLorentzSpace1985},
\cite{klinglerCompletudeVarietesLorentziennes1996},
\cite{kulkarni3dimensionalLorentzSpaceforms1985}.
Such a pair is called
\emph{admissible}.
By results of
Salein
\cite{saleinVarietesAntideSitter2000}
and Kassel
\cite{kasselQuotientsCompactsEspaces2009},
a pair
is admissible if and
only if
$ j $ is Fuchsian
and $ \rho $ is strictly
dominated by $j$.
By the theorem
\ref{th:main},
such pairs exist
for any $ \rho : \Gamma \to PSL(2, \R) $
of non maximal Euler class,
and it is proved in
\cite{gueritaudCompactAntideSitter2015}
that each Fuchsian representations $ j $
dominates strictly
a $ \rho $ whose non maximal Euler class
can be speficied.
Moreover, Tholozan
\cite{tholozanDominatingSurfaceGroup2017}
describes the space of Fuchsian
representations
which dominates a given $ \rho $:
he produces an explicit diffeomorphism
between this space and
the Teichmüller space of the surface.

The notion of domination has been
generalized to higher rank linear
representations
and 
has been proved to be a useful
concept to construct
Anosov representations,
see \cite{gueritaudAnosovRepresentationsProper2017}.

% \section{Organization of the article}
% \label{sec:outline}

The article is organized as follows.
In section \ref{sec:notions}
we review the standard notions
of metric geometry we need.
In section \ref{sec:harm_triang},
we study metric triangulation
and show the existence
of \emph{harmonic equivariant map}.
This allow us to construct,
in section \ref{sec:con_surf},
a \emph{triangulated conical hyperbolic structure}
which dominates the representation $ \rho $.
In section \ref{sec:desing} we explain
how to handle the eventual singularites
of this conical structure.
Finaly, in section \ref{sec:rigidity}
we show that the domination is an isometric
embedding when the domination is not strict.

\textbf{Acknowledgements.}
The author wants to thanks
his advisor Bertrand Deroin for introducing him
to this subject and for his constant support.
He also thanks Nicolas Tholozan for many
useful discussions
and Anton Petrunin for his explanation
of Reshetnyak's Majorization theorem.

\section{Notions of metric geometry}
\label{sec:notions}

We introduce the notions
of metric geometry
we use in this article.
Our references
are
\cite{haefligerMetricSpacesNonpositive1999}
and
\cite{alexanderAlexandrovGeometryPreliminary2019}.

We denote the distance
between two points
$ x $ and $ y $
in a metric space
$ X $
by
$ \abs{xy} $,
$ \abs{x,y} $
or
$ \abs{xy}_{X} $
depending on the context.
A geodesic in a metric space
is an isometric embedding
of a closed interval
of the real line.
A metric space is geodesic
if every two points
are joined by a geodesic.
We work principally with
complete geodesic metric space.
In a geodesic space,
we denote by $ [xy] $
any geodesic between
$ x $ and $ y $.
A metric is $ D $-geodesic
if every two points
at distance less that $ D $
are joined by a geodesic.

Let $ \kappa \in \R $
define $ \MM_{\kappa} $
to be the
\emph{model plane of curvature $ \kappa $ },
that is the unique simply connected manifold
of dimension 2 with constant sectional
curvature $ \kappa $.
In this article we work with $ \kappa = -1 $,
$ \kappa = 1 $
and
$ \kappa = 0 $,
and in these cases
we have
$ \MM_{-1} = \HH^{2} $
the hyperbolic plane
,
$ \MM_{1} = \Sp^{2} $
the unit sphere of dimension 2
and
$ \MM_{0} = \R^{2} $
the Euclidian plane.
Denote by $ D_{\kappa} $
the diameter of $ \MM_{\kappa} $
which is infinite for $ \kappa \le 0 $
and $ \pi \sqrt{\kappa} $
for $ \kappa > 0 $.

Fix a $ \kappa \in \R $.
In a geodesic space $ X $,
consider three points
$ x,y,z $.
A triangle $ \Delta = [x,y,z] $
is a choice of geodesic
between each pair of points.
If
$ \abs{xy} + \abs{yz} + \abs{zx} < 2 D_{\kappa} $,
there exists a unique (up to isometry)
triangle in $ \MM_{\kappa} $
with the same side lengths.
Denote by $ \ti{\Delta} = [\ti{x} \ti{y} \ti{z}] $
this triangle.
We call it the comparison triangle of $ \Delta $.
Define a map $ \ti{\Delta} \to \Delta $
by sending
$ \ti{x} \mapsto x $,
$ \ti{y} \mapsto y $,
$ \ti{z} \mapsto z $,
and each side isometrically to
the corresponding side.
This map is called the comparison map
of $ \Delta $.
To a point $ p $ of $ \Delta $
corresponds a point $ \ti{p} $
of $ \ti{\Delta} $ that
we call the comparison point
of $ p $.

\begin{definition}
	[{\cite[Ch. II.1]{haefligerMetricSpacesNonpositive1999}}]
	\label{def:cat_k}
	A $ D_{\kappa} $-geodesic metric space
	$ X $ is a $ \cat{\kappa} $ space
	if for every geodesic triangle
	$ \Delta $ with perimeter $ < 2 D_{\kappa} $,
	the comparison map $ \ti{\Delta} \to \Delta $
	is
	1-Lipschitz.
	This means that for every
	$ x,y \in \Delta $,
	the comparison points
	$ \ti{x}, \ti{y} \in \ti{\Delta} $
	satisfy
	\begin{equation}
		\abs{xy}_{X}
		\le
		\abs{\ti{x} \ti{y}}
		_{\MM_{\kappa} } 
		.
	\end{equation}
	We say that the triangle
	$ \Delta $ satisfies
	the $ \cat{\kappa} $ inequality
	(see figure \ref{fig:cat_ineq}).
	A space which is locally $ \cat{\kappa} $
	is said to have \emph{curvature $ \le \kappa $}.
\end{definition}

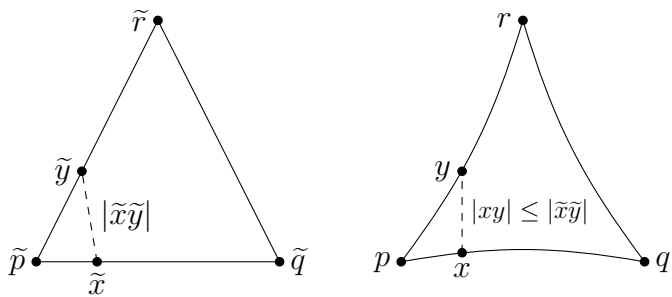
\begin{figure}[h]
	\centering
	\begin{tikzpicture}[scale=0.8]
	\coordinate (P) at (0,0);
	\coordinate (Q) at (4,0);
	\coordinate (R) at (2,4);

	\coordinate (X) at (1,0);
	\coordinate (Y) at (0.75,1.5);

	% P' = P + (6, 0)
	\coordinate (P') at (6,0);
	\coordinate (Q') at (4+6,0);
	\coordinate (R') at (2+6,4);

	\coordinate (X') at (1+6,0.15); % +(0,epsilon) for the bended line...
	\coordinate (Y') at (6 + 0.75+0.25,1.5); % +(epsilon,0) for the bended line...

	% comparison triangle
	\draw (P) node[anchor=east]{$\ti{p}$}
	to (Q) node[anchor=west]{$\ti{q}$}
	to (R) node[anchor=east]{$\ti{r}$}
	to cycle;
	\filldraw [black] (P) circle (2pt);
	\filldraw [black] (Q) circle (2pt);
	\filldraw [black] (R) circle (2pt);
	\filldraw [black] (X) node[anchor=north]{$\ti{x}$} circle (2pt);
	\filldraw [black] (Y) node[anchor=east]{$\ti{y}$} circle (2pt);

	\draw[dashed] (X)
	-- node[right]{$\abs{\ti{x} \ti{y} }$}
	(Y);

	% orginial triangle
	\draw (P') node[anchor=east]{$p$}
	to[bend left=10] (Q') node[anchor=west]{$q$}
	to[bend left=10] (R') node[anchor=east]{$r$}
	to[bend left=10] cycle;
	\filldraw [black] (P') circle (2pt);
	\filldraw [black] (Q') circle (2pt);
	\filldraw [black] (R') circle (2pt);
	\filldraw [black] (X') node[anchor=north]{$x$} circle (2pt);
	\filldraw [black] (Y') node[anchor=east]{$y$} circle (2pt);

	\draw[dashed] (X')
	-- node[right, scale=0.8]{$\abs{x y } \le \abs{\ti{x} \ti{y} }$}
	(Y');
\end{tikzpicture} 
	\caption{A comparison triangle for $ \kappa=0 $}
	\label{fig:cat_ineq}
\end{figure}

We fix a $ \kappa \in \R $
and $ X $ a $ \cat{\kappa} $ space.
A $ \cat{\kappa} $ space is
$ D_{\kappa} $-uniquely geodesic:
two points at distance less than $ D_\kappa $
are joined by a unique geodesic.

We define the angle between two
geodesic in a $ \cat{\kappa} $ space.
Let $ c_{1} $ and $ c_{2} $
be two geodesics starting at a
point $ x \in X $.
For a small $ t > 0 $,
consider the triangle
$ \Delta = [x c_{1} (t) c_{2} (t)] $.
Denote by
$ \ti{\angle}_{x}^{\kappa} (c_{1}(t), c_{2} (t))  $ 
the angle at $ \ti{x} $
of the comparison triangle $ \ti{\Delta} $
in $ \MM_{\kappa} $.
Then the function
$ t \mapsto 
\ti{\angle}_{x}^{\kappa} (c_{1}(t), c_{2} (t)) $
is non-increasing
and its limit when
$ t $ goes to 0
is denoted
$\angle (c_{1}, c_{2}) $
and is called
the angle between $ c_{1} $
and $ c_{2} $.
We use the notation
$ \angle_{x} (y,z) =
\angle ([xy], [xz]) $.
The angle function satisfies
a triangle inequality
\begin{equation}
	\angle( c_{1}, c_{3} )
	\le
	\angle( c_{1}, c_{2} )
	+
	\angle( c_{2}, c_{3} )
	,
\end{equation}
for every geodesics
$ c_{1}, c_{2}, c_{3} $
starting at the same point.
This means that
the angle function
induces a pseudo-metric
on the set of
geodesic starting at a point $ x $.
Two geodesics with angle 0
define the same direction.
The associated metric space
is the \emph{space of direction
of $ X $ at $ x $ }
and is denoted $ \Sigma_{x} X $.
Element of this
space are directions
and the distance between
two such directions
is given by the angle
between two representatives.
The completion of
$ \Sigma_{x} X $
is a $ \cat{1} $ space
\cite[Th. II.3.19]{haefligerMetricSpacesNonpositive1999} 
which we confund with
$ \Sigma_{x} X $.

A subset $ C $ of $ X $
is convex if for each $ x,y \in C$,
the geodesic
$ [xy] $
lies entirely in $ C $.
Suppose in this paragraph
that $ \kappa \le 0 $.
A closed convex subset
of a $ \cat{\kappa} $
space is itself
a $ \cat{\kappa} $ space.
If $ C $ is a closed subset
of $ X $
there exists a retraction
$ \pi : X \to C $
called the
\emph{projection on the convex $ C $}
which maps a point $ x $
to the closed point $ \pi (x) \in C $
\cite[Prop. II.2.19]{haefligerMetricSpacesNonpositive1999}.
This projection is a 1-Lipschitz map.

Le $ X_{1} $ and $ X_{2} $ be
two $ \cat{\kappa} $ spaces
with convex subspaces
$ C_{1} \subset X_{1} $
and
$ C_{2} \subset X_{2} $.
If $ i : C_{1} \to C_{2} $
is an isometry,
by \emph{Reshetnyak's gluing theorem}
\cite[Th. II.11.1]{haefligerMetricSpacesNonpositive1999},
the gluing of $ X_{1} $
and $ X_{2} $ along $ i $
is a $ \cat{\kappa} $ space.

Consider a triangle
$ \Delta = [xyz] $
with perimeter $ < 2 D_{\kappa} $
in $ X $.
By the $ \cat{\kappa} $ inequality
we know that for each pair
of points $ p,q \in \Delta $
we have
\begin{equation}
	\abs{pq}
	\le
	\abs{\ti{p} \ti{q}}
	,
\end{equation}
where $ \ti{p} $
and $ \ti{q} $
are the comparison
points in the comparison
triangle
$ \ti{\Delta} $.
If for $ p,q $
on different sides
of $ \Delta $
this inequality
is an equality:
$ 
\abs{pq}
=
\abs{\ti{p} \ti{q}}
$,
we say we are
in \emph{the rigidity
case of the
$ \cat{\kappa} $
inequality
}.
In this case,
the comparison
map
$ \ti{\Delta} \to \Delta $
extends to an isometry
between
the convex hull
of $ \ti{\Delta} $
in $ \MM_{\kappa} $
and the convex hull
of $ \Delta $ in $ X $
\cite[Prop. II.2.9]{haefligerMetricSpacesNonpositive1999}.
If one of the angle of $ \Delta $
is equal to the corresponding
comparison angle in $ \ti{\Delta} $
we are also in the rigidity case
of the $ \cat{\kappa} $ inequality
and the same conclusion holds.

The $ \cat{\kappa} $ 
inequality for triangles
can be extended
to general curves:
\begin{theorem}
	[{Reshetnyak's majorization theorem
	\cite[8.12.4]{alexanderAlexandrovGeometryPreliminary2019}}]
	\label{th:majorization}
	A closed curve $ \sigma $
	of length $ < 2 D_{\kappa} $
	in a $ \cat{\kappa} $ space $ X $
	is majorized by a convex
	$ C \subset \MM_{\kappa} $.
	This means that
	there exists
	a convex $ C \subset \MM_{\kappa} $
	and a 1-Lipschitz map
	$ M:C \to X $
	such that the restriction
	of $ M $ to
	$ \partial C $ is
	a length parametrization
	of $ \sigma $.
\end{theorem}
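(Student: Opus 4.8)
The plan is to reduce the statement to the case where $\sigma$ is a closed polygon (a finite concatenation of geodesics), and then to prove the polygonal case by induction on the number of edges, the base case being essentially the $\cat{\kappa}$ inequality itself. For the reduction, inscribe in $\sigma$ a sequence of polygons $\sigma_m$ by subdividing $\sigma$ more and more finely; then $\sigma_m\to\sigma$ uniformly and $\mathrm{length}(\sigma_m)\to\mathrm{length}(\sigma)<2D_\kappa$, so $\mathrm{length}(\sigma_m)<2D_\kappa$ for $m$ large. Granting the polygonal case, majorize each $\sigma_m$ by a convex $C_m\subset\MM_\kappa$ with a $1$-Lipschitz $M_m:C_m\to X$. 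The $C_m$ have uniformly bounded perimeter, hence bounded diameter, so a subsequence converges in the Hausdorff topology to a convex $C\subset\MM_\kappa$ (recall $\MM_\kappa$ is proper). Passing to an ultralimit $X_\omega$ of $X$ based at a point of $\sigma$ — inside which $X$ sits as a closed convex subset — the $M_m$ converge to a $1$-Lipschitz map $C\to X_\omega$ whose restriction to $\partial C$ is a length parametrization of $\sigma\subset X\subset X_\omega$. For $\kappa\le 0$, composing with the $1$-Lipschitz projection $X_\omega\to X$ (which is the identity on $X$) yields the required $M:C\to X$; the case $\kappa>0$ is handled by the analogous local statement.

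If $\sigma$ bounds a triangle $T=[p_1p_2p_3]$ of perimeter $<2D_\kappa$, take for $C$ the solid comparison triangle $\Conv(\ti T)\subset\MM_\kappa$: the comparison map is $1$-Lipschitz on $\partial\ti T$ by the definition of $\cat{\kappa}$, and it extends to a $1$-Lipschitz map on $\Conv(\ti T)$ by ruling with the geodesics issued from a vertex, a standard consequence of the $\cat{\kappa}$ inequality. The main tool for the induction is Alexandrov's lemma \cite[I.2]{haefligerMetricSpacesNonpositive1999}: if two triangles of $\MM_\kappa$ are glued along a common side with angle sum $\ge\pi$ at one endpoint of that side, they may be replaced by a single triangle in which that endpoint is straightened onto a side, the distances to, and the angle at, the opposite vertex only increasing.

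For the inductive step, given a polygon $P=[p_1\cdots p_n]$, $n\ge 4$, of perimeter $<2D_\kappa$, draw the diagonal $[p_1p_3]$, splitting $P$ into the triangle $T=[p_1p_2p_3]$ and the polygon $P'=[p_1p_3p_4\cdots p_n]$ of $n-1$ edges and perimeter $\le\mathrm{perimeter}(P)<2D_\kappa$. By induction $P'$ is majorized by a convex $C'\subset\MM_\kappa$ carrying a triangulation whose triangles have compatible $1$-Lipschitz maps to $X$. Glue the solid comparison triangle $\Conv(\ti T)$ onto $C'$ along the boundary edge corresponding to $[p_1p_3]$: both sides have length $\abs{p_1p_3}$ and map onto the same geodesic of $X$, so the maps agree there and give a well-defined $M$ on the glued region $D$. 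If $D$ is convex, every $\MM_\kappa$-geodesic between two of its points meets the gluing edge, so $M$ is $1$-Lipschitz on $D$ and $\partial D$ length-parametrizes $P$, and we are done. Otherwise $D$ has a reflex corner at $\ti p_1$ or $\ti p_3$, and we absorb it via Alexandrov's lemma by merging $\ti T$ with the adjacent triangle(s) of $C'$, straightening the offending vertex onto an edge: the resulting ``over-opened'' triangle still carries a $1$-Lipschitz map to $X$ whose boundary is a length parametrization of the corresponding broken geodesic (by the monotonicity part of Alexandrov's lemma together with the $\cat{\kappa}$ inequality), and the reparametrization can be arranged so that $\partial D$ still carries the vertices of $P$ as its marked boundary points. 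Each such step strictly decreases the number of corners of $D$, so after finitely many steps $D$ is convex and majorizes $P$.

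The crux, and the step I expect to be the main obstacle, is precisely the combinatorics of this inductive step: choosing how to merge triangles so that the straightening process provably terminates, verifying that the ``over-opened'' triangles it produces still carry $1$-Lipschitz maps into $X$ with the correct boundary parametrization, and keeping all the perimeters $<2D_\kappa$ throughout so that comparison triangles, unique geodesics, and Alexandrov's lemma remain available. The remaining points are routine: that a map assembled from $1$-Lipschitz maps on two convex pieces glued along a common convex edge is globally $1$-Lipschitz, and the convergence of the convex sets and of the boundary parametrizations used in the reduction to polygons.
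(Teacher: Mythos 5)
First, note that the paper does not prove this statement: it is quoted from \cite[8.12.4]{alexanderAlexandrovGeometryPreliminary2019}, and the only place the paper engages with its proof is Section \ref{sec:rigidity}, where the polygonal case \cite[8.12.14]{alexanderAlexandrovGeometryPreliminary2019} is recapped in order to be modified. Your overall strategy --- reduce to inscribed polygons by a limit/ultralimit argument, then induct on the number of vertices by splitting off a triangle $T=[p_1p_2p_3]$, majorizing the two pieces and gluing along $[\ti{p_1}\ti{p_3}]$ --- is exactly the strategy of that reference and of the paper's recap, including the base case (comparison triangle extended to its convex hull) and the observation that a map pieced together from $1$-Lipschitz maps on two regions separated by a common geodesic edge is $1$-Lipschitz. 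Where you diverge is the treatment of the non-convex gluing: the paper (following AKP) applies Reshetnyak's \emph{gluing} theorem to view $\Conv(\ti{T})\cup C'$ with its intrinsic length metric as an abstract $\cat{\kappa}$ space, observes that the reflex angle at $\ti{p_3}$ makes the concatenation $[\ti{p_2}\ti{p_3}]\cup[\ti{p_3}\ti{p_4}]$ a geodesic there, so the boundary is an $(N-1)$-gon in a $\cat{\kappa}$ space, and applies the induction hypothesis \emph{in that space}, composing the resulting majorizer with the glued $1$-Lipschitz map to $X$. You instead propose to stay inside $\MM_\kappa$ and straighten the reflex corner with Alexandrov's lemma.

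That step is where your argument has a genuine gap, and it is not merely the bookkeeping you flag. The reflex corner of $D=\Conv(\ti T)\cup C'$ at $\ti{p_3}$ means that the angle of $\ti T$ at $\ti{p_3}$ plus the \emph{total} interior angle of the convex set $C'$ at $\ti{p_3}$ exceeds $\pi$; but the latter angle is in general spread over several triangles of the triangulation of $C'$ meeting at $\ti{p_3}$. Alexandrov's lemma applies to \emph{two} triangles sharing the side $[\ti{p_1}\ti{p_3}]$ with angle sum $\ge\pi$ at $\ti{p_3}$, and merging $\ti T$ with the single adjacent triangle $\tau$ of $C'$ need not meet that hypothesis (one can have $\angle_{\ti{p_3}}\ti T+\angle_{\ti{p_3}}\tau<\pi$ while $\angle_{\ti{p_3}}\ti T+\angle_{\ti{p_3}}C'>\pi$); merging with ``the adjacent triangle(s)'' in the plural is not covered by the lemma at all. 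Moreover, after a merge you must re-prove that the over-opened comparison triangle carries a $1$-Lipschitz map to $X$ restricting to a length parametrization of the broken geodesic $p_2\!-\!p_3\!-\!p_4$; justifying this amounts to majorizing a triangle inside the glued length space, i.e.\ to the gluing-theorem argument you are trying to avoid. I would replace this step by the AKP recursion, which sidesteps all of it. Two smaller points: your limit argument lands in the ultrapower $X^\omega$ and your retraction back to $X$ only works for $\kappa\le 0$, whereas the paper crucially applies the theorem with $\kappa=1$ in $\Sigma_x X$ (Lemma \ref{lem:short_poly}), so ``the analogous local statement'' needs to be made precise there; and in the inductive step you should check that the perimeter of the intermediate polygon $[p_1p_3p_4\cdots p_n]$ stays $<2D_\kappa$, which requires $\abs{p_1p_3}\le\abs{p_1p_2}+\abs{p_2p_3}$ --- true, but worth saying.
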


An useful case of this theorem
is when the curve $ \sigma $
is a geodesic closed polygon
$ [x_1 \cdots x_{n}] $,
that is the curve
obtained by concatenating
the
geodesic segments
$ [x_{i} x_{i+1}] $.
In this case the convex
$ C $ can be choosen
such that its boundary
$ \partial C $
is a geodesic polygon
with same side lengths
and the map $ M $
sends vertices to
corresponding vertices,
see \cite[8.12.14]{alexanderAlexandrovGeometryPreliminary2019}
and also the discussion of section
\ref{sec:rigidity}.

For a triangle $ \Delta $
in $ X $,
the comparison map
$ \ti{\Delta} \to \Delta $
is defined on the sides
of the triangle $ \ti{\Delta} $.
The majorization theorem \ref{th:majorization}
allows us to extend this map
to the convex hull of $ \ti{\Delta} $.

\section{Harmonic triangulation}
\label{sec:harm_triang}

The first part of the proof is to find
a conical hyperbolic structure which
dominates $ \rho $.
To construct this structure, we start with
a topological triangulation of the surface,
and then upgrade it to a hyperbolic simplicial
complex.
For that we construct a discrete harmonic map.

\subsection{Triangulation, metric triangulation}
\label{sub:triangulation}

We fix a triangulation $ \TT $ of the surface $ S $,
where $ \TT = (V, E, F) $ and $ V $ is the set of vertices,
$ E $ the set of edges and $ F $ the set of faces.
We can choose, for example,
the Riemann triangulation:
in the standard $ 4g $-gon folding
model of $ S $, connect one
vertex to all of the other
(see figure \ref{fig:triangulation}).
We lift it to a triangulation of $ \ti{S} $
that we denote
$ \ti{\TT} = (\ti{V}, \ti{E}, \ti{F}) $.

\begin{figure}[h]
	\centering
		\begin{tikzpicture}[scale=0.8]
		\node[draw,minimum size=4cm,name=O,regular polygon,regular
		polygon sides=8] {};
		% \foreach \x in {1,2,...,8}{
		% 	\draw (O.corner 1) -- (O.corner \x);
		% 	\node[fill,circle,draw,inner sep=1pt] at (O.corner \x) {};
		% }
		\draw
		(O.corner 1) node[fill, circle, inner sep=1pt]{}
		-- node[above]{$a_{1}$}
		(O.corner 2) node[fill, circle, inner sep=1pt]{};
		\draw
		(O.corner 2) node[fill, circle, inner sep=1pt]{}
		-- node[above left]{$b_{1}$}
		(O.corner 3) node[fill, circle, inner sep=1pt]{};
		\draw
		(O.corner 3) node[fill, circle, inner sep=1pt]{}
		-- node[left]{$a_{1}^{-1} $}
		(O.corner 4) node[fill, circle, inner sep=1pt]{};
		\draw
		(O.corner 4) node[fill, circle, inner sep=1pt]{}
		-- node[below left]{$b_{1}^{-1} $}
		(O.corner 5) node[fill, circle, inner sep=1pt]{};
		\draw
		(O.corner 5) node[fill, circle, inner sep=1pt]{}
		-- node[below]{$a_{2} $}
		(O.corner 6) node[fill, circle, inner sep=1pt]{};
		\draw
		(O.corner 6) node[fill, circle, inner sep=1pt]{}
		-- node[below right]{$b_{2} $}
		(O.corner 7) node[fill, circle, inner sep=1pt]{};
		\draw
		(O.corner 7) node[fill, circle, inner sep=1pt]{}
		-- node[right]{$a_{2}^{-1} $}
		(O.corner 8) node[fill, circle, inner sep=1pt]{};
		\draw
		(O.corner 8) node[fill, circle, inner sep=1pt]{}
		-- node[above right]{$b_{2}^{-1} $}
		(O.corner 1) node[fill, circle, inner sep=1pt]{};

		\draw
		(O.corner 1) node[fill, circle, inner sep=1pt]{}
		-- %node[sloped, scale=0.8, above]{$a_{1} b_{1} $}
		(O.corner 3) node[fill, circle, inner sep=1pt]{};
		\draw
		(O.corner 1) node[fill, circle, inner sep=1pt]{}
		-- %node[sloped, scale=0.8, above]{$a_{1} b_{1} a_{1}^{-1} $}
		(O.corner 4) node[fill, circle, inner sep=1pt]{};
		\draw
		(O.corner 1) node[fill, circle, inner sep=1pt]{}
		-- %node[sloped, scale=0.8, above]{$[a_{1}, b_{1}] $}
		(O.corner 5) node[fill, circle, inner sep=1pt]{};
		\draw
		(O.corner 1) node[fill, circle, inner sep=1pt]{}
		-- %node[sloped, scale=0.8, above]{$b_2^{-1} a_{2}^{-1} b_{2} $}
		(O.corner 6) node[fill, circle, inner sep=1pt]{};
		\draw
		(O.corner 1) node[fill, circle, inner sep=1pt]{}
		-- %node[sloped, scale=0.6, above]{$b_2^{-1} a_{2}^{-1} $}
		(O.corner 7) node[fill, circle, inner sep=1pt]{};

	\end{tikzpicture} 
	\caption{The Riemann triangulation for a genus 2 surface.}
	\label{fig:triangulation}
\end{figure}
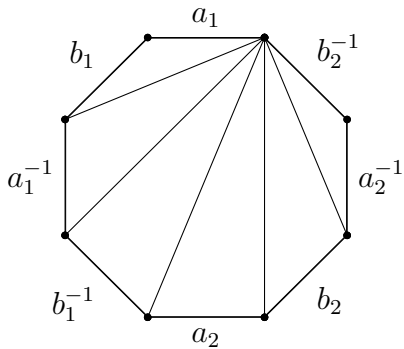

A length function on a triangulation
$ \TT $  is a function
$ \ell: E \to \R_{\ge 0} $ which satisfies the triangle
inequality: for all faces $ f $,
if $ e_{1}, e_{2}, e_{3} $ are the edges adjacents
to $ f $ then we have
\begin{equation}
	\ell (e_{1})
	\le
	\ell (e_{2})
	+
	\ell (e_{3})
	.
\end{equation}
If for a face $ f $, the inequality
above is an equality, we say
that $ \ell $ flatten the face $ f $.
It happens in particular if one
of the edges $ e $ of $ f $
is given length 0: in this case
we say that $ \ell $
flatten the edge $ e $.

The data
$ (\TT, \ell) $
of a triangulation with a length function 
is called a metric triangulation.
If $ \ell $ is a length function on $ \ti{\TT} $ which is
invariant by deck transformations, then it induces a length
function on $ \TT $ that we denote by the same symbol.

A map $ F : \ti{V} \to X $ which
is equivariant with respect to a representation
$ \rho : \Gamma \to \Isom(X) $ defines a length
function $ \ell_{F} $ on $ \ti{\TT} $ by the formula
$ \ell_{F} (e) = \abs{F(e_{-}) F(e_{+})} $,
where $ e_{-} $ and $ e_{+} $ are the vertices of $ e $.
By equivariance of $ F $,
$ \ell_{F} $ is invariant by deck transformation
and induces a length function on $ \TT $.

\subsection{Energy, harmonic map}
\label{sub:energy_harmonic_map}

We define the energy of
a map $ F: \ti{V} \to X $ which is
$ \rho $-equivariant.
Let $ \TT_{0} = (V_{0}, E_{0}, F_{0}) $
be a subtriangulation of $ \ti{\TT} $
which is a fundamental domain for the
action by deck transformations.
The energy of $ F $ is the following quantity:
\begin{equation}
	\En(F)
	=
	\sum
	_{e \in E_{0} }
	\ell_{F} (e)^{2}
	=
	\sum
	_{e \in E_{0} }
	\abs{F(e_{-}) F(e_{+})} 
	^{2}
	.
\end{equation}
We say that such an equivariant map
$ F: \ti{E} \to X $ is harmonic
if it minimizes the energy among all such maps.

We first show that harmonic maps exist.
\begin{theorem}
	\label{th:harm_exist}
	If $ \rho: \Gamma \to \Isom(X) $
	is a representation which
	does not fix a point on the boundary
	at infinity of $ X $
	then there exists a harmonic $ \rho $-equivariant
	map.
\end{theorem}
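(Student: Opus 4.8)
The energy function is defined on the space of $\rho$-equivariant maps $F: \tilde V \to X$. Since $\tilde V$ is the vertex set of a triangulation of the universal cover, a $\rho$-equivariant map is determined by its values on a finite set of vertices (a fundamental domain $V_0$, or rather a set of representatives of $\tilde V / \Gamma$). So the space of such maps is parametrized by $X^k$ for some finite $k$, and the energy $\En$ is a continuous (indeed, a sum of squared distances, hence locally Lipschitz) function on $X^k$. The natural strategy is the direct method: take a minimizing sequence $F_n$ with $\En(F_n) \to \inf \En =: E_\infty$, and try to extract a subsequence converging to a minimizer. The obstacle is that $X$ need not be locally compact (it is only complete $\cat{-1}$), so Arzelà–Ascoli / Bolzano–Weierstrass is unavailable, and a minimizing sequence could "escape to infinity."

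**Key steps.** First I would observe that the energy controls the diameter of the configuration: if the edge-graph on a fundamental domain is connected (which it is, for a triangulation of a surface, once we include enough $\Gamma$-translates, using that $\Gamma$ is generated by finitely many elements realized as edge-paths), then $\sum_e \ell_F(e)^2 \le E_\infty + 1$ bounds each individual $\ell_F(e)$, hence bounds $\abs{F(v)F(w)}$ for all pairs $v, w$ of vertices in a suitable finite connected subcomplex, \emph{and also} bounds $\abs{F(v)\, \rho(\gamma)F(v)}$ for each generator $\gamma$, by running along an edge-path from $v$ to $\gamma v$. So the orbit-type quantity $\sum_{\gamma \in S} \abs{F(v)\,\rho(\gamma)F(v)}$ (for $S$ a finite generating set) stays bounded along the minimizing sequence. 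Next, fix a basepoint vertex $v_0$ and let $p_n = F_n(v_0) \in X$. The points $p_n$ may run off to infinity, but I would replace $F_n$ by nothing — instead I would use a "barycenter" or "circumcenter" type argument: consider the function $x \mapsto \sum_{\gamma \in S} \abs{x\,\rho(\gamma)x}$ or its squared version on $X$; since $X$ is $\cat{-1}$ hence $\cat{0}$, squared-distance functions are convex, and one shows that if this function is bounded on the sequence $p_n$ but $p_n \to \infty$, then in the limit $\rho$ fixes a point of $\partial_\infty X$ — contradicting the hypothesis. Concretely: pass to the limit in $\partial_\infty X$ of $p_n$ (after extracting), and use convexity of the displacement functions together with the bound to show the limit point is fixed by every $\rho(\gamma)$, $\gamma \in S$, hence by all of $\Gamma$.

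**The heart of the matter.** The main obstacle — and the step requiring the most care — is precisely this properness/coercivity argument: showing that the assumption "$\rho$ fixes no point at infinity" forces minimizing sequences to stay in a bounded region of $X$ (after, if necessary, no modification is possible, so one really must argue by contradiction via the boundary). Once boundedness is established, $X$ being complete $\cat{-1}$ is \emph{not} locally compact, so even a bounded sequence need not subconverge; here I would invoke the fact that closed bounded convex subsets of a $\cat{0}$ space, while not compact, still enjoy the property that bounded sequences have a well-defined \emph{weak/circumcenter} limit — alternatively, and more in the spirit of this paper, I would minimize not over all of $X^k$ but note that the energy is a convex function on the $\cat{0}$ space $X^k$ (products of $\cat{0}$ are $\cat{0}$, and $F \mapsto \abs{F(e_-)F(e_+)}^2$ is convex along geodesics), and a convex, lower-semicontinuous, coercive function on a complete $\cat{0}$ space attains its minimum (this is a standard fact: e.g. the sublevel sets are closed bounded convex, nested, and one takes the limit of their circumcenters, or one uses that such functions have a unique minimizer by the variance inequality). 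So the proof reduces to: (i) $\En$ is convex on the $\cat{0}$ space $X^k$; (ii) $\En$ is lower-semicontinuous (in fact continuous); (iii) $\En$ is coercive, i.e. its sublevel sets are bounded, which is where the no-fixed-point-at-infinity hypothesis enters through the displacement-function estimate above. Then standard $\cat{0}$ convex minimization yields a (in fact unique) harmonic $\rho$-equivariant map.
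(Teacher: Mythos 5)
Your overall architecture --- view $\En$ as a convex, lower semicontinuous function on the complete $\cat{0}$ space $X^{k}$ of equivariant maps, bound displacement functions by $\sqrt{\En}$ times the graph distance, and use the no-fixed-point-at-infinity hypothesis to prevent escape to infinity --- matches the paper's proof in spirit (the paper runs the minimization through Jost's Moreau--Yosida approximation rather than nested sublevel sets, but that difference is cosmetic). However, there is a genuine gap at exactly the step you identify as the heart of the matter. You propose to ``pass to the limit in $\partial_{\infty}X$ of $p_{n}$ (after extracting)'' when the minimizing sequence is unbounded. For a \emph{proper} $\cat{-1}$ space this works, because $X\cup\partial X$ is compact in the cone topology. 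But $X$ here is only assumed complete, and you yourself note earlier that local compactness is unavailable: an unbounded sequence in a non-proper $\cat{-1}$ space need not have any subsequence converging to a point of $\partial_{\infty}X$. Producing a fixed boundary point from an unbounded set on which all displacement functions $x\mapsto\abs{x\,\rho(\gamma)x}$ are bounded is precisely the non-trivial input the paper imports: the Caprace--Lytchak result that an \emph{evanescent} action on a complete $\cat{-1}$ space fixes a point of $X\cup\partial X$ (Theorem \ref{evanescent_fix}). Without that (or an ultralimit argument substituting for compactness), your contradiction does not go through.

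Two smaller points. First, your parenthetical ``(in fact unique)'' is false: $\En$ is convex but not strictly convex, and the paper's example of a representation into a real tree whose two axes overlap in a segment exhibits a continuum of harmonic maps. Second, your logic is ``hypothesis $\Rightarrow$ coercive $\Rightarrow$ minimizer,'' but the hypothesis does not actually yield coercivity in all cases: the correct dichotomy (as in the paper) is that failure of boundedness forces a fixed point in $X\cup\partial X$; a boundary fixed point is excluded by hypothesis, while an interior fixed point gives the constant map as an energy-zero minimizer even though sublevel sets may be unbounded. Restructuring your argument as this case division, and supplying the evanescence theorem for the unbounded case, would close the gap.
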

To prove existence, we use results from
\cite{jostNonpositiveCurvatureGeometric1997a} and
\cite{capraceInfinityFinitedimensionalCAT2010}
that we state now.
Let $ (N,d) $ be a complete $ \cat{0} $ metric space.
Let $ \Phi : N \to \R $ be a convex, lower semicontinuous
function which is bounded from below.
For $ \lambda > 0 $ we define the
Moreau-Yosida approximation of $ \Phi $ by:
\begin{equation}
	\Phi^{\lambda} 
	=
	\inf_{y \in N}
	\left(
		\lambda \Phi (y)
		+
		\abs{py}^{2} 
	\right)
	,
\end{equation}
where $ p \in N $ is some fixed base point.
For every $ \lambda >0 $ there exists
a unique $ y_{\lambda} \in N $ such that
$ \Phi^{\lambda} = \Phi (y_{\lambda}) $
\cite[Lemma 3.1.2]{jostNonpositiveCurvatureGeometric1997a}.
\begin{theorem}[{\cite[Theorem 3.1.1]{jostNonpositiveCurvatureGeometric1997a}}]
	\label{jost_min}
	For such a function $ \Phi $,
	if for some sequence $ (\lambda_{n}) $
	of real numbers
	going to infinity
	the sequence $ (y_{\lambda_{n} } ) $ is bounded,
	then $ y_{\lambda} $ converges to a minimizer
	of $ \Phi $ when $ \lambda $ goes to infinity.
\end{theorem}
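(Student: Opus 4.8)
The plan is to apply Theorem~\ref{jost_min} to the energy functional on a suitable product $\cat 0$ space. Let $V_0$ be the set of vertices of $\ti\TT$ lying in the fundamental domain $\TT_0$, and consider the finite product $N = X^{V_0}$ with the $\ell^2$-metric. Since $X$ is a complete $\cat{-1}$ space it is in particular a complete $\cat 0$ space, and a finite $\ell^2$-product of complete $\cat 0$ spaces is again complete $\cat 0$. A $\rho$-equivariant map $F:\ti V\to X$ is determined by its restriction to $V_0$, so it corresponds to a point of $N$; conversely any point of $N$ extends uniquely to a $\rho$-equivariant map using the deck action and $\rho$. Under this identification, the energy $\En$ becomes a function $\Phi:N\to\R_{\ge 0}$. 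I would check that $\Phi$ is convex: each term $\abs{F(e_-)F(e_+)}^2$ is a convex function of the pair $(F(e_-),F(e_+))$ because the distance function on a $\cat 0$ space is convex and $t\mapsto t^2$ is convex nondecreasing; one must be slightly careful because an edge $e\in E_0$ may have one endpoint outside $V_0$, namely equal to $\gamma\cdot v$ for some $v\in V_0$ and $\gamma\in\Gamma$, in which case $F(\gamma v)=\rho(\gamma)F(v)$, and since $\rho(\gamma)$ is an isometry the term is still a convex function of the coordinates of the point of $N$. Continuity of $\Phi$ (hence lower semicontinuity) is clear, and $\Phi\ge 0$ so it is bounded below.

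By Theorem~\ref{jost_min}, it suffices to produce a sequence $\lambda_n\to\infty$ along which the Moreau--Yosida minimizers $y_{\lambda_n}\in N$ stay bounded; then $y_\lambda$ converges to a minimizer of $\Phi$, i.e.\ to a harmonic $\rho$-equivariant map. The key step is therefore a \emph{coercivity}-type estimate, and this is where the hypothesis that $\rho$ fixes no point of $\partial_\infty X$ enters. Suppose for contradiction that for every $\lambda_n\to\infty$ the points $y_{\lambda_n}$ leave every bounded set. Fix the base point $p\in N$, say $p=(x_0,\dots,x_0)$ for some $x_0\in X$. I would argue that $\Phi(y_{\lambda_n})$ stays bounded: indeed from the definition $\lambda_n\Phi(y_{\lambda_n})+\abs{p\,y_{\lambda_n}}^2\le \lambda_n\Phi(p)+0$, so $\Phi(y_{\lambda_n})\le\Phi(p)$ and moreover $\abs{p\,y_{\lambda_n}}^2\le\lambda_n(\Phi(p)-\Phi(y_{\lambda_n}))\le\lambda_n\Phi(p)$. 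Write $y_{\lambda_n}=(x_1^n,\dots,x_m^n)$ for the $m=\abs{V_0}$ coordinates. Since $\En(F_n)=\Phi(y_{\lambda_n})$ is bounded by $\Phi(p)$, consecutive edge lengths $\ell_{F_n}(e)=\abs{x_i^n x_j^n}$ (with the convention $x_j^n$ possibly replaced by $\rho(\gamma)x_i^n$) are uniformly bounded; because $\ti\TT$ is connected and $\Gamma$ acts cocompactly, this forces all the $x_i^n$ to stay within a bounded distance of, say, $x_1^n$, and the diameter of $\{x_i^n\}_i$ together with all its $\Gamma$-translates relevant to the edges in $E_0$ is bounded uniformly in $n$. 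Hence $x_1^n$ itself must go to infinity in $X$, and the whole configuration $\{\rho(\gamma)x_1^n\}$ stays in a bounded neighbourhood of $x_1^n$ that moves off to infinity.

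To derive the contradiction I would pass to the boundary at infinity. Up to a subsequence, $x_1^n$ converges to a point $\xi\in\partial_\infty X$ (using that $\cat 0$ spaces have a well-behaved boundary); since $\abs{x_1^n\,\rho(\gamma)x_1^n}$ is bounded for each fixed $\gamma$ in a generating set while $x_1^n\to\infty$, the translated sequence $\rho(\gamma)x_1^n$ converges to the same $\xi$, so $\rho(\gamma)\xi=\xi$ for all generators $\gamma$, hence $\rho$ fixes $\xi\in\partial_\infty X$, contradicting the hypothesis. (This is the step where I expect to have to be careful: to make "converges to the same boundary point" rigorous one should invoke a cited lemma — e.g.\ from \cite{capraceInfinityFinitedimensionalCAT2010} — that a bounded-displacement sequence of isometries moving off to infinity forces a common fixed point at infinity, or equivalently use the cone topology and the fact that bounded perturbations converge to the same endpoint. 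In the finite-dimensional or proper case this is classical; for a general complete $\cat 0$ space one must check the statement holds, which is precisely the role played by \cite{capraceInfinityFinitedimensionalCAT2010} in the paper.) Having reached a contradiction, the minimizers $y_{\lambda_n}$ are bounded, Theorem~\ref{jost_min} applies, and the limit is the desired harmonic $\rho$-equivariant map. The main obstacle is thus the boundary-at-infinity argument establishing coercivity of $\En$ away from representations with a global fixed point at infinity; the convexity and semicontinuity of $\Phi$ are routine.
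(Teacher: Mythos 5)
Your proposal does not address the statement in question. Theorem~\ref{jost_min} is the abstract Moreau--Yosida convergence result of Jost: for a convex, lower semicontinuous, bounded-below $\Phi$ on a complete $\cat{0}$ space, if the resolvents $y_{\lambda_n}$ stay bounded along some sequence $\lambda_n\to\infty$, then $y_\lambda$ converges to a minimizer of $\Phi$. What you have written is instead an argument \emph{applying} this theorem, namely a proof of Theorem~\ref{th:harm_exist} on the existence of a harmonic $\rho$-equivariant map; you even open with ``the plan is to apply Theorem~\ref{jost_min},'' which already shows you are treating the statement as a black box rather than proving it. The convexity of the energy, the identification of $\Eq{\rho}$ with a finite product $X^{V_0}$, and the boundary-at-infinity coercivity argument are all ingredients of the \emph{application}, not of Theorem~\ref{jost_min} itself.

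A genuine proof of Theorem~\ref{jost_min} is an abstract statement about resolvents in a complete $\cat{0}$ space and involves no triangulations, no energy functional, and no fixed points at infinity. The usual ingredients are: (i) uniform convexity of $y\mapsto\lambda\Phi(y)+\abs{py}^2$ along geodesics (inherited from the $\cat{0}$ parallelogram inequality and the convexity of $\Phi$), which gives existence and uniqueness of $y_\lambda$ and a quantitative stability estimate; (ii) the resolvent identity and the resulting monotonicity of $\lambda\mapsto\Phi(y_\lambda)$ and of $\lambda\mapsto\abs{p\,y_\lambda}$, from which one deduces a contraction-type estimate $\abs{y_\lambda y_\mu}^2\le\text{const}\cdot(\Phi(y_\mu)-\Phi(y_\lambda))$ for $\lambda\ge\mu$; (iii) the hypothesis that some subsequence $(y_{\lambda_n})$ is bounded, together with the monotonicity, to show the whole net $(y_\lambda)$ is Cauchy as $\lambda\to\infty$; and (iv) lower semicontinuity of $\Phi$ to pass to the limit and show $\Phi(\lim y_\lambda)=\inf\Phi$. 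None of these appear in your write-up. (Incidentally, even as a proof of Theorem~\ref{th:harm_exist} your argument diverges from the paper's: the paper invokes the evanescence criterion of Caprace--Lytchak, Theorem~\ref{evanescent_fix}, rather than directly constructing a boundary fixed point from a diverging minimizing sequence, and the latter route needs more care in a non-proper $\cat{-1}$ space.)
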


Following \cite{capraceInfinityFinitedimensionalCAT2010},
we say that the action of a group $ G $ on
a $ \cat{0} $ space is evanescent if there
exists an unbounded set $ T $ such that
for all compact $ K \subset G $,
the set
$ \left\{ \abs{x, gx} ; x \in T, g \in K \right\} $
is bounded.
\begin{theorem}[{\cite[Proposition 1.8]{capraceInfinityFinitedimensionalCAT2010}}]
	\label{evanescent_fix}
	Let $ G $ act on a $ \cat{-1} $ complete metric space $ X $.
	If the action is evanescent then
	$ G $ fixes a point in $ X \cup \partial X $.
\end{theorem}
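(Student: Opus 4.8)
The plan is to exhibit a single point of $X\cup\partial X$ fixed by all of $G$, splitting according to whether the $G$-orbits are bounded or not. If some orbit $G x_{0}$ is bounded then, $X$ being a complete $\cat{0}$ space, the bounded set $G x_{0}$ has a unique circumcenter $c$; since $G$ permutes $G x_{0}$, uniqueness forces $g c=c$ for every $g\in G$, so $G$ fixes $c\in X$. Assume from now on that every $G$-orbit is unbounded; the task is then to produce a $G$-fixed point of $\partial X$, and the idea is to build it out of a horofunction of $X$ extracted along the unbounded set furnished by evanescence.

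Let $T$ be an unbounded witnessing set for evanescence. Applying the definition to the compact singletons $K=\{g\}$ gives $c_{g}:=\sup_{x\in T}\abs{x,gx}<\infty$ for each $g\in G$. Fix $p\in T$ and pick $x_{n}\in T$ with $\abs{p x_{n}}\to\infty$. For $y\in X$ put $b_{n}(y)=\abs{y x_{n}}-\abs{p x_{n}}$; each $b_{n}$ is $1$-Lipschitz and convex (distance functions are convex in $\cat{0}$ spaces), with $\abs{b_{n}(y)}\le\abs{p y}$ and $b_{n}(p)=0$. Fixing a non-principal ultrafilter $\omega$ on $\N$, the pointwise limit $b(y):=\lim_{\omega}b_{n}(y)$ is a $1$-Lipschitz convex horofunction $b\colon X\to\R$ with $b(p)=0$, and it is almost $G$-invariant: for $g\in G$ and $y\in X$,
\begin{equation}
	b_{n}(g^{-1}y)-b_{n}(y)=\abs{y,g x_{n}}-\abs{y,x_{n}},
	\qquad\text{so}\qquad
	\abs{b_{n}(g^{-1}y)-b_{n}(y)}\le\abs{g x_{n},x_{n}}\le c_{g},
\end{equation}
and passing to the ultralimit yields $\norm{b\circ g^{-1}-b}_{\infty}\le c_{g}$ for all $g\in G$.

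It remains to convert this $G$-quasi-invariant horofunction into a genuine $G$-fixed boundary point; this is where $\cat{-1}$, rather than $\cat{0}$, is used: $X$ is then Gromov hyperbolic with a universal constant $\delta$. Two facts enter. First, if $\abs{x_{n},y_{n}}$ is bounded and $x_{n}$ converges to a point of $\partial X$, then $y_{n}$ converges to the same point (immediate from the Gromov-product description of convergence and $\delta$-hyperbolicity). Second, in a $\cat{-1}$ space a horofunction that lies within bounded sup-distance of the Busemann function $b_{\xi}$ of some $\xi\in\partial X$ determines $\xi$ uniquely, and does so $\Isom(X)$-equivariantly and only through its bounded-perturbation class. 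The model situation is that $G$ contains a loxodromic element $g_{0}$ with axis $A$: the convex function $y\mapsto\abs{y,g_{0}y}$ tends to $\infty$ as $y$ leaves every bounded neighbourhood of $A$ (a standard $\cat{-1}$ estimate), so evanescence confines $T$ to a bounded neighbourhood of $A$; hence $(x_{n})$ converges along $\omega$ to an endpoint $\xi$ of $A$, so $b=b_{\xi}$ based at $p$, and for any $g\in G$, since $\abs{x_{n},g x_{n}}\le c_{g}$ is bounded while $x_{n}\to\xi$, the first fact gives $g x_{n}\to\xi$, whence $g\xi=\xi$. In general, the second fact turns $\norm{b\circ g^{-1}-b}_{\infty}\le c_{g}$ into $g\xi=\xi$, once we know $b$ is close to a Busemann function.

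The main obstacle is precisely that last proviso when $G$ has no loxodromic element: because $X$ is not assumed locally compact, the unbounded sequence $(x_{n})$ need not subconverge in $X\cup\partial X$ and the horofunction $b$ need not be (close to) the Busemann function of any geodesic ray, so one cannot read a boundary point off $b$ by hand. Pinning it down requires the fine structure of $\cat{-1}$ (equivalently, rank-one $\cat{0}$) spaces at infinity — concretely the circumcenter-at-infinity construction of \cite{capraceInfinityFinitedimensionalCAT2010} for finite-dimensional $\cat{0}$ spaces, which attaches to the $G$-quasi-invariant horofunction $b$ a canonical point of $X\cup\partial X$ fixed by $G$. That is the heart of the argument.
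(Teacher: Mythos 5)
The statement \ref{evanescent_fix} is not proved in the paper; it is quoted as a known result, with the citation \cite[Proposition 1.8]{capraceInfinityFinitedimensionalCAT2010} already built into the theorem header. So there is no in-paper argument to measure your attempt against, only the external reference.

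Your proposal is a sensible framing of why such a result should hold: splitting on whether some $G$-orbit is bounded, handling the bounded case by the circumcenter of a bounded set in a complete $\cat{0}$ space, and in the unbounded case extracting a $1$-Lipschitz convex horofunction $b$ as an ultralimit of normalized distance functions along the evanescent set, together with the key estimate $\norm{b\circ g^{-1}-b}_{\infty}\le c_{g}$. All of that is correct. But you have not actually proved the proposition: converting this $G$-quasi-invariant horofunction into a genuine $G$-fixed point of $X\cup\partial X$ -- in the absence of a loxodromic element, and without any properness hypothesis on $X$ to make the escaping sequence subconverge -- is precisely the step you punt back to \cite{capraceInfinityFinitedimensionalCAT2010}. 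Since that is the entire mathematical content of the statement, the proposal is a reduction plus a citation, not an independent proof, and you say so yourself in your last paragraph. The ``model situation'' with a loxodromic $g_{0}$ is illustrative only; nothing in the hypotheses guarantees $G$ contains one, so it does not constitute a reduction of the general case.

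One further caution: you invoke the Caprace--Lytchak circumcenter-at-infinity construction ``for finite-dimensional $\cat{0}$ spaces,'' but the statement here assumes $X$ is a $\cat{-1}$ complete space with no dimension restriction. If you want to close the argument by citing their work, you should check that the hypotheses of the precise result you quote are actually met in the $\cat{-1}$ setting (or explain why $\cat{-1}$ substitutes for the finite-dimensionality they assume), rather than silently mixing the two regimes.
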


We can now prove the existence
of harmonic map.
\begin{proof}[Proof of Theorem \ref{th:harm_exist}]
	We will apply Theorem \ref{jost_min} to the
	function $ \En $.
	This function is defined on
	$ \Eq{\rho} $,
	the space of $ \rho $-equivariant
	maps from $ \ti{E} $ to $ X $.
	This is
	a complete $ \cat{0} $ space with
	the product metric.
	The function $ \En $
	satisfies the regularity assumptions of the theorem.
	If the sequence
	$ (F_{n}) $ such that
	$ \En^{n} = \En ( F_{n} ) $
	is bounded in $ \Eq{\rho} $,
	then the theorem applies and $ \En $
	admits a minimizer.

	If on the contrary the sequence $ (F_{n}) $ 
	is unbounded in $ \Eq{\rho} $,
	we show that a minimizer still exists.

	For a function $ F \in \Eq{\rho} $
	denote by $ \Lip(F) $ its Lipschitz
	constant with respect to the graph
	metric on $ \ti{V} $ (that is, such
	that adjacent vertices are at distance 1).
	Observe that $ \Lip(F) \le \sqrt{\En(F)} $.

	Now, if the sequence $ (F_{n}) $ 
	is unbounded in $ \Eq{\rho} $,
	then there exists $ v \in \ti{V} $
	such that $ (F_{n}(v)) $ is unbounded
	in $ X $,
	and we have for all $ \gamma \in \Gamma $:
	\begin{equation}
		\abs{ F_{n} (v),
		\rho(\gamma)F_{n}(v)}
		\le
		\Lip(F_{n})
		\abs{v, \gamma v}
		\le
		\sqrt{\En(F_{n})}
		\abs{v, \gamma v}
		,
	\end{equation}
	and as
	$\En(F_{n})$
	is bounded,
	the function $ x \mapsto \abs{x, \rho(\gamma)x} $
	is bounded on the unbounded set
	$ \left\{ F_{n} (v); n \in \N \right\} $.
	This implies that the action of $ \Gamma $
	on $ X $ given by $ \rho $ is evanescent.
	By Theorem \ref{evanescent_fix}, 
	$ \rho $ has a fixed point in
	$ X \cup \partial X $.
	By assumption, $ \rho $ does not fix
	a point in $ \partial X $ so it
	fixes a point $ p_{0} $ in $ X $.
	In this case, the constant function
	$ v \mapsto p_{0} $ is $ \rho $-equivariant
	and is of energy $ 0 $, so is a minimizer.
\end{proof}

The harmonic map is not unique in general.
Understanding when it is is
interesting question.

\begin{exam}
	Consider a real tree $ T $
	and two hyperbolic isometries
	$ a $ and $ b $.
	Recall that the axe of an hyperbolic
	isometry $ \gamma $ of $ T $
	is the geodesic where
	$ \abs{x, \gamma x} $ 
	is minimial.
	Suppose that 
	the axes of $ a $ and $ b $
	intersect along a non-trivial
	geodesic segment.

	% Let $ s : \Gamma \to F_{2} $ 
	% be the surjective morphism
	% that sends
	% $ a_{1}  $ to $ a $
	% $ a_{2} $ to $ b $,
	% and the other generators
	% to $ e $,
	Remark that
	for the standard presentation
	\begin{equation}
		\ps{a_{1}, b_{1}, \dots, a_{g}, b_{g}
		\tq [a_{1}, b_{1}] \cdots [a_{g}, b_{g}]=e }
	\end{equation} 
	of $ \Gamma $,
	$ a_{1} $
	and
	$ a_{2} $
	generate a free group.
	Consider the representation
	$ \rho : \Gamma \to \Isom(T) $ 
	obtained by
	sending
	$ a_{1}  $ to $ a $,
	$ a_{2} $ to $ b $,
	and the other generators
	to the identity.

	Let $ \TT = (V, E, F) $ be
	the Riemann triangulation on $ S $
	(see figure \ref{fig:triangulation}).
	The energy of a $ \rho $-equivariant
	map $ F : \ti{V} \to T $
	is
	\begin{align}
		\mathcal{E} (F)
		% & =
		% \sum_{i} 
		% \left( 
		% \abs{
		% 	F(x_{0} )
		% 	F( a_{i}x_{0} )
		% }^{2} 
		% +
		% \abs{
		% 	F(x_{0} )
		% 	F( b_{i}x_{0}  )
		% }^{2} 
		% \right)
		% \\
		& =
		\abs{
			F(x_{0} ),
			a F( x_{0}  )
		}^{2} 
		+
		\abs{
			F(x_{0} ),
			b F( x_{0}  )
		}^{2} 
		,
	\end{align}
	where $ x_{0} $ is any lift
	of the unique vertex of $ \TT $.
	The energy is minimized
	exactly when $ F(x_{0}) $
	belongs to the axes
	of $ a $ and $ b $.
	As these axes intersect
	along a segment,
	the harmonic map is not unique.
	% \remark{Are all example of non uniqueness
	% of this kind.}
\end{exam}

\subsection{Properties of harmonic map}
\label{sub:property_harm}

Given a $ \rho $-equivariant
harmonic map $ F: \ti{V} \to X $,
we say that the metric triangulation
$ (\ti{\TT}, \ell_{F}) $ 
is a
harmonic triangulation.
In this section we prove that
the conical angles of an harmonic triangulation
are all greater than $ 2 \pi $
and we construct the singular
hyperbolic surface associated
to a harmonic triangulation.
% , that is
% that the associated conical hyperbolic surface
% is locally $ \cat{-1} $.

First we prove the following
result:
\begin{propo}
	\label{prop:con_angles}
	Let $ F : \ti{V} \to X $
	be an harmonic,
	$ \rho $-equivariant map
	which does not flatten
	any edge,
	and
	$ x \in \ti{V} $.
	Denote by $ (y_{i}) $ 
	the cyclically ordered
	neighbours of $ x $
	in $ \ti{\TT} $,
	and $ \alpha_{i} =
	\angle_{F(x)} (F(y_{i}), F(y_{i+1})) $.
	Then:
	\begin{equation}
		\sum_{i}
		\alpha_{i}
		\ge
		2 \pi
		.
	\end{equation}
\end{propo}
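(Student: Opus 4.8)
The plan is to use the harmonicity of $F$ only through the first variation of the energy at the single vertex $x$, and then to analyse the space of directions $\Sigma_{p}X$ at $p := F(x)$, which is a $\cat{1}$ space. Write $q_{i} = F(y_{i})$ and let $u_{i}\in\Sigma_{p}X$ be the direction of the geodesic $[p,q_{i}]$, so that $\alpha_{i} = \angle_{p}(q_{i},q_{i+1})$ is exactly the distance between $u_{i}$ and $u_{i+1}$ in $\Sigma_{p}X$.

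First I would establish the variational inequality
\begin{equation}
	\sum_{i} \abs{pq_{i}}\,\cos\angle_{p}(v,u_{i})\ \le\ 0
	\qquad\text{for every geodesic direction } v \text{ at } p.
\end{equation}
To see this, let $\gamma$ be the geodesic issued from $p$ in the direction $v$, and let $F_{t}$ be the $\rho$-equivariant map obtained from $F$ by replacing $F(\eta x)$ with $\rho(\eta)\gamma(t)$ for all $\eta\in\Gamma$ and leaving every other vertex value unchanged. Only the finitely many edges of $\TT$ meeting the image of $x$ change length, so by minimality of the energy $\tfrac{d}{dt}\big|_{t=0^{+}}\En(F_{t})\ge 0$. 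Expanding $\abs{\gamma(t)q_{i}}^{2}-\abs{pq_{i}}^{2}$ and using the first variation formula for the distance function in a $\cat{0}$ space, namely $\tfrac{d}{dt}\big|_{0^{+}}\abs{\gamma(t)q_{i}} = -\cos\angle_{p}(v,u_{i})$, a direct computation gives $\tfrac{d}{dt}\big|_{0^{+}}\En(F_{t}) = -2\sum_{i}\abs{pq_{i}}\cos\angle_{p}(v,u_{i})$, whence the inequality. (If $x$ happens to be adjacent to a vertex of its own $\Gamma$-orbit the computation is the same: the two variations of such a self-edge, transported by the isometry $\rho(\eta)$, contribute precisely the two corresponding terms of the sum.)

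Now suppose for contradiction that $\sum_{i}\alpha_{i} < 2\pi$. The closed curve $\sigma$ obtained by concatenating geodesics $[u_{i},u_{i+1}]$ in $\Sigma_{p}X$ has length $\sum_{i}\alpha_{i} < 2\pi = 2D_{1}$, so by Reshetnyak's majorization theorem (Theorem \ref{th:majorization} with $\kappa = 1$) there is a convex set $C\subset\Sp^{2}$ with $\partial C$ of length $\sum_{i}\alpha_{i}$ and a $1$-Lipschitz map $M\colon C\to\Sigma_{p}X$ restricting on $\partial C$ to a length parametrization of $\sigma$; in particular there are points $\ti{u}_{i}\in\partial C$ with $M(\ti{u}_{i}) = u_{i}$. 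A convex subset of $\Sp^{2}$ of perimeter $< 2\pi$ is contained in an open hemisphere; this is classical and follows, for instance, from the spherical Crofton formula, the perimeter being $2\pi$ minus half the measure of the great circles meeting the interior of $C$, so that this measure is positive and $C$ is squeezed into an open half. Hence $C$ has circumradius $r < \pi/2$, and (a short projection argument shows) its circumcenter $c^{*}$ lies in $C$, so $\abs{c^{*}\ti{u}_{i}}_{\Sp^{2}}\le r < \pi/2$ for all $i$. Put $v_{0} := M(c^{*})\in\Sigma_{p}X$; as $M$ is $1$-Lipschitz, $\angle_{p}(v_{0},u_{i})\le \abs{c^{*}\ti{u}_{i}}_{\Sp^{2}} < \pi/2$ for every $i$. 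The condition ``$\angle_{p}(v,u_{i}) < \pi/2$ for all $i$'' is open in $v$ and holds at $v_{0}$, hence holds at some genuine geodesic direction $v$ (these being dense in $\Sigma_{p}X$). For that $v$ each $\cos\angle_{p}(v,u_{i}) > 0$, and since $F$ flattens no edge $\abs{pq_{i}} = \abs{F(x)F(y_{i})} > 0$; therefore $\sum_{i}\abs{pq_{i}}\cos\angle_{p}(v,u_{i}) > 0$, contradicting the variational inequality. Thus $\sum_{i}\alpha_{i}\ge 2\pi$.

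The first variation computation is routine once one has the first variation formula in $\cat{0}$ spaces, and the recombination of self-edge terms is a bookkeeping matter. The step needing genuine care is the last paragraph: that the majorizing convex region on the sphere, having perimeter below $2\pi$, is small enough to fit in an open hemisphere and hence to provide a direction making an acute angle with every $u_{i}$. This is where the $\cat{1}$ structure of the space of directions, combined with the majorization theorem, does the essential work; the rest is then a contradiction with harmonicity.
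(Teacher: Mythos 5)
Your proof is correct and follows essentially the same route as the paper: the harmonic critical inequality (which the paper cites from Izeki--Nayatani rather than rederiving via first variation as you do), combined with Reshetnyak majorization in the space of directions $\Sigma_{F(x)}X$ and the hemisphere lemma for short convex curves in $\Sp^{2}$, to produce a direction at angle $<\pi/2$ from all the $u_{i}$ and reach a contradiction. The only quibble is your parenthetical Crofton justification, where the formula is stated backwards (the point is that a positive measure of great circles \emph{miss} $C$ when the perimeter is $<2\pi$); the conclusion you need is the classical fact the paper cites as \cite[Lem.~1.2.1]{alexanderAlexandrovGeometryPreliminary2019}.
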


The proof of proposition \ref{prop:con_angles}
use the following result of 
Izeki and Natayani:
\cite{izekiCombinatorialHarmonicMaps2005}
\begin{propo}[{\cite[prop. 3.5]{izekiCombinatorialHarmonicMaps2005}}]
	\label{prop:critic}
	If $ F : \ti{V} \to X $
	is an harmonic,
	$ \rho $-equivariant map,
	then
	for every vertex
	$ x \in \ti{V} $
	and every direction
	$ v \in \Sigma_{F(x)} X $ 
	at $ x $:
	\begin{equation}
		\sum_{
			e \in \ti{E}
		\tq e_{-} = x}
		% \ps{v, \pi_{F(x)} F(e_{+} )}
		\abs{x e_{+}}
		% \cos \angle ( v, [x e_{+}] )
		\cos \angle_{x} ( v, e_{+}  )
		\le 0
		.
	\end{equation}
\end{propo}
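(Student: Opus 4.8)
The plan is to use that $F$ minimises the energy by perturbing it only at the orbit $\Gamma x$; this is legitimate because $\Gamma$, being the deck group of $\ti{S}\to S$, acts freely on $\ti{V}$. The left-hand side of the asserted inequality is a continuous function of $v\in\Sigma_{F(x)}X$ (the set $\{e\in\ti{E}\tq e_-=x\}$ is finite and each $v\mapsto\angle_{F(x)}(v,F(e_+))$ is $1$-Lipschitz), and the directions of honest geodesics issuing from $F(x)$ are dense in $\Sigma_{F(x)}X$; so it is enough to treat the case where $v$ is the initial direction of a geodesic $c:[0,\varepsilon)\to X$ with $c(0)=F(x)$, which exists because $X$ is geodesic. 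For $t\in[0,\varepsilon)$ let $F^{t}$ be the map that agrees with $F$ off $\Gamma x$ and sends $\gamma x\mapsto\rho(\gamma)\,c(t)$: it is well defined, $\rho$-equivariant, and $F^{0}=F$, so harmonicity gives $\En(F^{t})\ge\En(F)$ for all $t\in[0,\varepsilon)$.

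The core claim is that $t\mapsto\En(F^{t})$ is right-differentiable at $0$, with
\[
	\left.\frac{d}{dt}\right|_{t=0^{+}}\En(F^{t})
	=
	-2\sum_{e\in\ti{E}\tq e_-=x}\abs{F(x)F(e_+)}\,\cos\angle_{F(x)}(v,F(e_+)),
\]
a summand with $F(e_+)=F(x)$ being read as $0$. Granting this, the right-hand side is $\ge0$ since $t=0$ is a minimum of $\En(F^{t})$, and this is exactly the inequality of the proposition. To compute the derivative, write $\En(G)=\sum_{\bar{e}\in E}\ell_{G}(\bar{e})^{2}$ as a finite sum over the edges of $\TT$ ($\ell_G$ being $\Gamma$-invariant, it descends); only the edges of $\TT$ incident to $\bar{x}=[x]$ change length under the perturbation. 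A non-loop edge $\bar{e}$ of $\TT$ at $\bar{x}$ has a unique lift $[x\,y]$ meeting $x$, with $y\notin\Gamma x$, and contributes $\abs{c(t)F(y)}^{2}$ to $\En(F^{t})$; a loop $\bar{e}$ at $\bar{x}$ lifts to exactly the two edges $[x\,\delta x]$ and $[x\,\delta^{-1}x]$ meeting $x$ (distinct, since $\Gamma$ is torsion-free), and contributes $\abs{c(t)\,\rho(\delta)c(t)}^{2}$. Hence the non-loop edges at $\bar{x}$ biject with the neighbours of $x$ lying outside $\Gamma x$, and the loops at $\bar{x}$ enumerate the remaining neighbours in pairs $\{\delta x,\delta^{-1}x\}$. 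The first variation formula for distances in $\cat{0}$ spaces (a $\cat{-1}$ space is $\cat{0}$), see \cite{haefligerMetricSpacesNonpositive1999}, gives $\frac{d}{dt}\big|_{0^{+}}\abs{c(t)q}=-\cos\angle_{F(x)}(v,q)$ for fixed $q\neq F(x)$, and, applied at both moving endpoints, $\frac{d}{dt}\big|_{0^{+}}\abs{c(t)\,\rho(\delta)c(t)}=-\cos\angle_{F(x)}(v,F(\delta x))-\cos\angle_{F(\delta x)}(\rho(\delta)_{*}v,F(x))$; since $\rho(\delta)$ is an isometry sending $F(x)$ to $F(\delta x)$ and $F(\delta^{-1}x)$ to $F(x)$, the last angle equals $\angle_{F(x)}(v,F(\delta^{-1}x))$, while $\abs{F(x)F(\delta x)}=\abs{F(x)F(\delta^{-1}x)}$. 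Differentiating the squared lengths and summing over the edges of $\TT$ at $\bar{x}$ — each loop contributing the two terms for the pair of neighbours it enumerates — yields the displayed identity. (Alternatively, $\En$ is convex on the $\cat{0}$ space $\Eq{\rho}$ and $t\mapsto F^{t}$ is a geodesic there, so the one-sided derivative exists for free and only its value must be identified.)

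The \textbf{main obstacle} is the two-endpoint first variation used for the loop terms, i.e.\ differentiating $\abs{c(t)\,\rho(\delta)c(t)}$ when \emph{both} endpoints move. I would obtain it from the splitting
\[
	\abs{c(t)\,\rho(\delta)c(t)}-\abs{F(x)F(\delta x)}
	=\big(\abs{c(t)\,\rho(\delta)c(t)}-\abs{c(t)F(\delta x)}\big)+\big(\abs{c(t)F(\delta x)}-\abs{F(x)F(\delta x)}\big)
\]
by applying the one-point first variation formula to each bracket, using that $c(t)\to F(x)$ and that $s\mapsto\angle_{F(\delta x)}(\rho(\delta)_{*}v,s)$ is continuous at $s=F(x)$ to pass to the limit in the first bracket; a naive triangle-inequality estimate is too lossy to give the sharp constant. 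The remaining points are routine: the passage from geodesic directions to arbitrary $v\in\Sigma_{F(x)}X$ is by density and continuity of the expression in $v$, and the combinatorial fact that a loop at $\bar{x}$ has precisely the two lifts $[x\,\delta x],[x\,\delta^{-1}x]$ meeting $x$ follows from the covering correspondence together with torsion-freeness of $\Gamma$.
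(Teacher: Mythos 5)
The paper does not actually prove this proposition: it is cited wholesale from Izeki–Nayatani \cite[Prop.\ 3.5]{izekiCombinatorialHarmonicMaps2005} and no argument is reproduced, so there is no in-paper proof to compare against. Your reconstruction is correct and is, in substance, the standard first-variation argument that Izeki–Nayatani use. The ingredients are all right: perturbation of $F$ along the full orbit $\Gamma x$ (to keep equivariance, justified by freeness of the deck action); the correct accounting of loops of $\TT$ at $\bar x$ as pairs $\{x\,\delta x\},\{x\,\delta^{-1}x\}$, distinct by torsion-freeness; reduction to geodesic directions via density in $\Sigma_{F(x)}X$ (which is, by definition in this paper, the completion of the set of geodesic germs) together with $1$-Lipschitz dependence of each summand on $v$; and the identification $-2\sum_{e_-=x}\abs{F(x)F(e_+)}\cos\angle_{F(x)}(v,F(e_+))$ for the right-derivative of $\En(F^t)$ at $t=0$, which is indeed $\ge 0$ at the minimizer.

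You have also correctly isolated the only genuinely delicate point, namely the two-endpoint first variation of $\abs{c(t)\,\rho(\delta)c(t)}$. Two remarks there. First, your splitting handles the lower bound on the difference quotient (via convexity of $s\mapsto\abs{c(s)\,\rho(\delta)c(t)}$ and continuity of $s\mapsto\angle_{F(\delta x)}(\rho(\delta)_*v,s)$ at $s=F(x)$, the latter following from the angle triangle inequality and $\tilde\angle_{F(\delta x)}(F(x),s)\to 0$); for the matching upper bound a cleaner route is to interpose a fixed interior point $m\in[F(x)\,F(\delta x)]$ and write $\abs{c(t)\,\rho(\delta)c(t)}\le\abs{c(t)\,m}+\abs{m\,\rho(\delta)c(t)}$, then apply the one-endpoint first variation to each piece; this avoids any semicontinuity issues at a moving vertex and gives the sharp constant. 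Second, for the inequality actually asserted one only needs the upper bound $\limsup_{t\to 0^+}\frac{1}{t}\bigl(\En(F^t)-\En(F)\bigr)\le -2\sum(\cdot)$, since $\En(F^t)\ge\En(F)$ already gives the other side; so even if one were worried about existence or value of the limit, the midpoint trick alone closes the argument. Your parenthetical appeal to convexity of $\En$ on $\Eq{\rho}$ to guarantee the one-sided derivative is also fine. In short, the proof is sound; the only edit I would suggest is replacing the continuity step for the first bracket with the interior-point splitting for the upper bound, which is both simpler and sufficient.
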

We call this inequality
the \emph{harmonic critical inequality},
because it expresses the fact
that a harmonic map
is a critical point of the energy.
It means that every vertex
is the barycenter of its
neighbours, in a weak sense.

We also need a lemma about
short geodesic polygons in $ \cat{1} $-spaces.
Recall that the radius of a set
$ A $ in a metric space is the
infimum of the radius of balls
that contain $ A $.
\begin{lemma}
	\label{lem:short_poly}
	Let $ \sigma $
	be a geodesic polygon
	of length $ < 2 \pi $
	in a $ \cat{1} $-space $ \Sigma $.
	Then the radius of
	$ \sigma $ is $ < \frac{\pi}{2} $.
\end{lemma}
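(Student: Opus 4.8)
I need to show that a closed geodesic polygon $\sigma$ of length $< 2\pi$ in a CAT(1) space $\Sigma$ has radius $< \pi/2$. The natural tool is Reshetnyak's majorization theorem (Theorem \ref{th:majorization}), applied with $\kappa = 1$: since $D_1 = \pi$, a closed curve of length $< 2\pi = 2D_1$ is majorized by a convex region $C \subset \mathbb{S}^2$ via a 1-Lipschitz map $M : C \to \Sigma$ whose restriction to $\partial C$ is a length parametrization of $\sigma$. So the whole problem reduces to a statement about convex subsets of the round sphere: a convex region bounded by a curve of length $< 2\pi$ on $\mathbb{S}^2$ is contained in an open hemisphere (equivalently, has radius $< \pi/2$), and then 1-Lipschitz images shrink the radius.

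**The spherical step.** The key geometric fact is: if $C \subset \mathbb{S}^2$ is convex and $\partial C$ has length $< 2\pi$, then $C$ lies in an open hemisphere. One clean way: a convex subset of $\mathbb{S}^2$ not contained in any open hemisphere must contain a pair of antipodal points, or contain a great circle, or — more carefully — its boundary must have length $\geq 2\pi$. The cleanest route is probably the spherical isoperimetric-type comparison, or simply: the boundary of a convex region on the sphere, being a convex curve, has length equal to the perimeter of its "polar dual" computation, and a convex region containing no open hemisphere has boundary length $\geq 2\pi$ (with equality exactly for a closed hemisphere or a lune degenerating). I would phrase it via the total curvature / Gauss–Bonnet: for a convex spherical polygon $C$ with exterior angles $\beta_j$ and side lengths (on great circles) summing to $L$, Gauss–Bonnet gives $\mathrm{Area}(C) + \sum \beta_j = 2\pi$ wait — for a geodesic polygon on $\mathbb{S}^2$, $\sum(\pi - \beta_j) = \mathrm{Area}(C) + (n-2)\pi$ is not quite what I want; instead I recall $\sum \beta_j = 2\pi - \mathrm{Area}(C)$ for the exterior angles of a convex spherical polygon, so $\sum \beta_j < 2\pi$. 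That alone doesn't immediately bound $L$. So a more robust approach: take any point $p$, and suppose $C$ is not in the open hemisphere centered at any point, i.e., $C$ meets the closed complementary hemisphere of every point; equivalently the circumradius of $C$ is $\geq \pi/2$. Then I claim $\mathrm{length}(\partial C) \geq 2\pi$. Pick $p \in C$ with $\mathrm{dist}(p, \partial C)$ maximal; if this distance is $\geq \pi/2$ then $C$ contains a closed hemisphere and its boundary has length $\geq 2\pi$; if it is $< \pi/2$ the support argument at the far point gives the bound. Honestly, the slickest version: a convex region $C \subsetneq \mathbb{S}^2$ with circumradius $\geq \pi/2$ contains two points at distance $\geq \pi/2$ from a common center, and unwinding convexity of the boundary forces $\mathrm{length}(\partial C)\geq 2\pi$ — I would cite or prove this as the spherical lemma. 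Given the excerpt references \cite{alexanderAlexandrovGeometryPreliminary2019} heavily, this may well be available there; I would look there for "the boundary of a convex set on $\mathbb{S}^2$ not lying in a hemisphere has length at least $2\pi$."

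**Transferring through the majorizing map.** Once I have a center $\bar c \in \mathbb{S}^2$ and radius $r < \pi/2$ with $C \subset \bar B(\bar c, r)$, I set $c := M(\bar c) \in \Sigma$. Since $M$ is 1-Lipschitz and $\partial C$ maps onto $\sigma$, for every $s \in \sigma$ write $s = M(\bar s)$ with $\bar s \in \partial C \subset C$, so $|cs|_\Sigma = |M(\bar c) M(\bar s)|_\Sigma \leq |\bar c\, \bar s|_{\mathbb{S}^2} \leq r < \pi/2$. Hence $\sigma \subset \bar B(c, r)$ with $r < \pi/2$, giving radius $< \pi/2$ as claimed. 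I should double-check the one subtlety: $M$ is defined on $C$ (not just $\partial C$), so $M(\bar c)$ makes sense and lies in $\Sigma$; and the majorization theorem I'm invoking does produce $C$ with $\partial C$ a geodesic polygon of the same side lengths (the polygonal refinement noted right after Theorem \ref{th:majorization}), which is what lets me talk about its circumradius on the sphere.

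**Main obstacle.** The genuine content is the purely spherical lemma — that a convex region on $\mathbb{S}^2$ whose boundary has length $<2\pi$ has circumradius $<\pi/2$. Everything else (invoking majorization, pushing forward the ball via the 1-Lipschitz map) is bookkeeping. So the proof I would write is:

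\begin{proof}[Proof sketch of Lemma \ref{lem:short_poly}]
By Reshetnyak's majorization theorem \ref{th:majorization} applied with $\kappa = 1$ (note $2D_1 = 2\pi$), together with its polygonal refinement, there is a convex region $C \subset \Sp^2$ whose boundary $\partial C$ is a geodesic polygon of the same side lengths as $\sigma$, hence of length $< 2\pi$, and a $1$-Lipschitz map $M : C \to \Sigma$ restricting to a length parametrization of $\sigma$ on $\partial C$.

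We claim $C$ has circumradius $< \pi/2$, i.e.\ $C$ is contained in an open hemisphere. Suppose not. Then for every $p \in \Sp^2$ the set $C$ meets the closed hemisphere opposite to $p$; applying this and using that $C$ is convex and closed, one shows that $\partial C$ contains a pair of points together with minimizing geodesics whose concatenation is a closed curve of length $\ge 2\pi$ inside $\partial C$ --- more precisely, a convex subset of $\Sp^2$ with circumradius $\ge \pi/2$ has boundary of length $\ge 2\pi$, contradicting $\mathrm{length}(\partial C) < 2\pi$. Hence there exist $\bar c \in \Sp^2$ and $r < \pi/2$ with $C \subset \bar B(\bar c, r)$.

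Set $c := M(\bar c) \in \Sigma$. For any point $s$ on $\sigma$, choose $\bar s \in \partial C$ with $M(\bar s) = s$; since $M$ is $1$-Lipschitz,
\begin{equation}
	\abs{c s}_{\Sigma}
	= \abs{M(\bar c)\, M(\bar s)}_{\Sigma}
	\le \abs{\bar c\, \bar s}_{\Sp^2}
	\le r
	< \frac{\pi}{2}.
\end{equation}
Thus $\sigma \subset \bar B(c, r)$ with $r < \pi/2$, so the radius of $\sigma$ is $< \pi/2$.
\end{proof}
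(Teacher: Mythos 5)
Your proposal is correct and follows essentially the same route as the paper: reduce to $\Sp^{2}$ via Reshetnyak's majorization theorem, establish the spherical case, and push the center of the small ball forward through the $1$-Lipschitz majorizer $M$. The one step you leave open --- the purely spherical fact --- is exactly what the paper disposes of by citation, namely \cite[Lem.~1.2.1]{alexanderAlexandrovGeometryPreliminary2019}, which is the classical hemisphere lemma stating that \emph{any} closed curve of length $< 2\pi$ on $\Sp^{2}$ has radius $< \pi/2$; citing that statement for the curve $\partial C$ directly is cleaner than your proposed detour through the circumradius of the convex region $C$, and avoids having to prove the auxiliary claim about convex sets of circumradius $\ge \pi/2$.
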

\begin{proof}
	The lemma is
	true if $ \Sigma = \Sp^{2} $,
	see for example 
	\cite[Lem. 1.2.1]{alexanderAlexandrovGeometryPreliminary2019}.
	For general $ \Sigma $,
	as the length of $ \sigma $ 
	is $ < 2 \pi $,
	we apply Reshetnyak's majorization
	theorem \ref{th:majorization} to $ \sigma $.
	It gives a convex set $ C \subset \Sp^{2} $
	whose boundary $ \ti{\sigma} $
	have the same length
	as $ \sigma $,
	and a 1-Lipschitz map
	$ M : C \to \Sigma $
	which sends
	$ \ti{\sigma} $ to $ \sigma $.
	We can apply the lemma
	to $ \ti{\sigma} \subset \Sp^{2} $,
	and we have that the radius
	of $ \ti{\sigma} $ is
	$ < \frac{\pi}{2} $.
	Because $ M $ is 1-Lipschitz
	and maps 
	$ \ti{\sigma} $ to $ \sigma $,
	we have that the radius
	of $ \sigma $ is 
	$ < \frac{\pi}{2} $.
\end{proof}

We can now prove proposition
\ref{prop:con_angles}.
\begin{proof}
	[Proof of proposition \ref{prop:con_angles}]
	We work in the
	space $ \Sigma = \Sigma_{F(x)} X $.
	Recall that it is a $ \cat{1} $-space
	and that the distance in $ \Sigma $
	is given by
	$ \abs{uv} = \angle (u, v)$
	for two geodesics $ u,v $ 
	starting at $ F(x) $.
	Consider the geodesic polygon $P$
	whose
	vertices are the $ [F(x) F(y_{i}] $.
	The length of $ P $
	is
	$ \sum_{i} \alpha_{i} $.
	Suppose, for the sake of contradiction,
	that
	$ \sum_{i} \alpha_{i} < 2 \pi $.
	Then applying
	lemma \ref{lem:short_poly} to $ P $
	gives that the radius
	of $ P $ is $ < \frac{\pi}{2} $.
	It means that there exists
	a direction $ v \in \Sigma $ such that for all
	$ u \in P $,
	$ \abs{uv} < \frac{\pi}{2} $.
	Now, by proposition \ref{prop:critic}
	we have
	\begin{equation}
		\sum_{i}
		% \ps{v, \pi_{F(x)} F(y_{i} )}
		\abs{F(x) F(y_{i}}
		\cos \angle_{F(x)}  ( v, F(y_{i}) )
		% \cos \angle ( v, [F(x) F(y_{i})] )
		\le 0
		,
	\end{equation}
	but
	$\cos \angle_{F(x)}  ( v, F(y_{i}) )$
	is positive because
	$ \angle_{F(x)}  ( v, F(y_{i}) ) < \frac{\pi}{2} $.
	This is a contradiction.
	We infer that
	$ \sum_{i} \alpha_{i} \ge 2 \pi $.
\end{proof}

\section{Triangulated conical hyperbolic surfaces}
\label{sec:con_surf}

In this section
we explain the concept
of triangulated conical hyperbolic surface
and how a metric triangulation
allows us to construct
a domination of $ \rho $
by a triangulated conical hyperbolic surface.

We use a definition suited for our
purposes:
\begin{definition}
	\label{def:con_hyp_surf}
	A \emph{triangulated conical hyperbolic surface}
	is a connected,
	triangulated metric space
	such that each face of the
	triangulation
	is isometric to
	a triangle in $ \HH^{2} $.
\end{definition}
A triangulated conical hyperbolic surface
is a
$ \HH^{2} $-simplicial complex,
in the sense of
\cite[Chapter I.7]{haefligerMetricSpacesNonpositive1999}.
We will only work with compact,
or covering of compact
triangulated
conical hyperbolic surfaces:
this implies that
such a surface has finitely
many isometry type of shapes,
and by \cite[Th. I.7.19]{haefligerMetricSpacesNonpositive1999},
it is a complete geodesic space.

For a triangulated conical hyperbolic surface $ C $,
we say that $ C $ is \emph{non-degenerate}
if each face of the triangulation
is a non-flat triangle.
In this case, $ C $ is a topological
surface.
If no edge of the triangulation
is flattened, we say that
the conical angle $ \theta_{x} $ of $ S $
at a vertex $ x $ of the triangulation
is the sum of the angles at $ x $
of the triangles adjacent to $ x $.
We have the following condition
for a triangulated conical hyperbolic surface
to be of curvature $ \le -1 $:
\begin{theorem}
	\label{th:cond_hyperbolic}
	[{\cite[th. I.7.5.2]{haefligerMetricSpacesNonpositive1999}}]
	A triangulated conical hyperbolic surface
	with no edge flattened
	is of curvature $ \le -1 $
	if and only if
	for each vertex
	$ x $ of the triangulation
	the conical angle
	$ \theta_{x} $
	is $ \ge 2 \pi $.
\end{theorem}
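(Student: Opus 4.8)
The plan is to reduce the global condition ``$C$ is of curvature $\le -1$'' --- which, $C$ being a complete geodesic space, means that every point of $C$ has a $\cat{-1}$ neighbourhood --- to a condition at each vertex. I would work under the assumption that $C$ is non-degenerate, hence a topological surface (the case relevant here), so that the link of every vertex is a single circle. First I would dispose of the non-vertex points. If $p$ lies in the interior of a face, a small metric ball at $p$ is isometric to a ball of $\HH^{2}$, hence $\cat{-1}$. If $p$ lies in the interior of an edge $e$, then $e$ is shared by exactly two faces, and a small ball at $p$ is the union, glued isometrically along the geodesic segment $e$, of the two half-discs that $e$ cuts out in those faces; this ball unfolds isometrically to a ball of $\HH^{2}$ (or one applies Reshetnyak's gluing theorem along the convex segment $e$), so it is again $\cat{-1}$. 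Thus the curvature condition holds automatically off the vertices, and everything is local at the vertices.

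The heart of the matter is a vertex $x$. Since no edge is flattened, the corner at $x$ of each face adjacent to $x$ is a genuine hyperbolic sector, and fitting these sectors together shows that a small ball $B(x,r)$ of $C$ is isometric to a ball at the apex of the hyperbolic cone $C_{-1}(\Sigma_{x} C)$ over the space of directions. Because $C$ is a surface, the faces around $x$ form a single cyclic chain, so $\Sigma_{x} C$ is a metric circle whose circumference is the sum of the face-angles at $x$, that is, $\theta_{x}$. I would then invoke the criterion for a hyperbolic cone to be $\cat{-1}$ (Berestovskii's theorem: $C_{-1}(Y)$ is $\cat{-1}$ if and only if $Y$ is $\cat{1}$; see \cite{haefligerMetricSpacesNonpositive1999}) to conclude that $C$ has curvature $\le -1$ at $x$ if and only if the circle $\Sigma_{x} C$ is $\cat{1}$.

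It then remains to observe that a circle of circumference $L$ is $\cat{1}$ exactly when $L \ge 2\pi$: if $L < 2\pi$, two antipodal points lie at distance $L/2 < \pi$ yet are joined by two distinct geodesics, contradicting the fact (recalled above) that $\cat{1}$ spaces are uniquely geodesic at distances $< \pi$; conversely, a circle of circumference $\ge 2\pi$ is $\cat{1}$, which is a classical fact. Assembling the three steps, $C$ is of curvature $\le -1$ if and only if $\theta_{x} \ge 2\pi$ at every vertex $x$. I expect the main obstacle to be the vertex case --- specifically, verifying carefully that $B(x,r)$ is isometric to a cone neighbourhood over the circle $\Sigma_{x} C$ and invoking the cone criterion. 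This is also the only point where one really uses that $C$ is a non-degenerate surface rather than an arbitrary triangulated $\HH^{2}$-complex: for a general vertex link the correct condition is the absence of a closed geodesic of length $< 2\pi$ in the link, which specialises to the single inequality $\theta_{x} \ge 2\pi$ precisely when the link is a circle.
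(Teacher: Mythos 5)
Your proof is correct and is essentially the standard link-condition argument (local analysis at interior points of faces and edges, identification of a small ball at a vertex with a ball at the apex of the hyperbolic cone over the link, Berestovskii's cone theorem, and the characterization of $\cat{1}$ circles as those of circumference $\ge 2\pi$), which is precisely the proof in the reference the paper cites; the paper gives no independent proof of this statement. Your explicit restriction to the non-degenerate case, which you flag yourself, is harmless, since that is the only situation in which the paper invokes this theorem.
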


We explain how to
construct triangulated conical hyperbolic surfaces.
Given a topological surface $ S $
and a metric triangulation
$ (\TT, \ell) $ on $ S $
we construct a conical hyperbolic
surface:
for each face $ f $ of
$ \TT $ with sides
$ e_{1} $,
$ e_{2} $
and
$ e_{3} $,
consider the triangle
$ \Delta_{f} $ 
in $ \HH^{2} $ with side lengths given by
$ \ell (e_{1})
, \ell (e_{2})$
and $ \ell (e_{3})$.
Then glue the triangles
$ \Delta_{f} $ along
their common side
following
the combinatoric of $ \TT $.
This gives a
triangulated conical hyperbolic surface $ C_{\TT} $
and a continuous surjection
$ p_{\TT} : S \to C_{\TT} $.
If $ \ell $ does not flatten any face,
this map is a homeomorphism.
We call this triangulated conical hyperbolic surface
$ C_{\TT} $ 
a conical structure on $ S $
(even when $ p_{\TT} $ is not a
homeomorphism).

Now suppose that
we are given a topological
surface $ S $
and a metric triangulation
$ (\TT, \ell) $ on $ S $.
The fundamental group
$ \pi_{1} C_{\TT} $
acts by deck transformation
on
its fundamental group
$ \ti{C_{\TT}} $
by isometries.
This gives a representation
$ \pi_{1} C_{\TT}
\to
\Isom (\ti{C_{\TT}}) $.
Composing with the
morphism induced
on $ \pi_{1} $ 
by $ p_{\TT}  $,
this gives a representation
$ \rho_{\TT} :
\pi_{1} S
\to
\Isom (\ti{C_{\TT}}) $
.
We call it the representation
associated to the conical
structure $ C_{\TT} $
on $ S $.
When the metric triangulation
is induced by
a $ \rho $-equivariant map
$ F: \ti{V} \to X $,
we denote by $ C_{F} $
the associated conical structure
and by $ \rho_{F} $
the associated representation.

If the
associated conical structure
$ C_{\TT} $
satisfies the condition
of theorem
\ref{th:cond_hyperbolic}
(remark that this condition
depends only of $ (\TT, \ell) $),
by theorem
\ref{th:cond_hyperbolic},
the space $ C_{\TT} $ 
is of curvature $ \le -1 $.
The universal covering
$ \ti{C_{\TT}} $
is then a $ \cat{-1} $ space.

\begin{propo}
	\label{prop:harm_dom}
	Let $ \TT = (V, E, F) $ be
	a triangulation
	on $ S $ and
	$ F : \ti{V} \to X $
	a
	$ \rho $-equivariant map.
	Then
	the representation
	$ \rho_{F} $ 
	associated
	to the conical structure
	$ C_{F} $ 
	on $ S $
	given by the metric
	triangulation
	$ (\TT, \ell_{F}) $ 
	dominates
	the representation $ \rho $.
\end{propo}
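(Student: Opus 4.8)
The plan is to construct an explicit $(\rho, \rho_F)$-equivariant $1$-Lipschitz map from $X$ to $\widetilde{C_F}$, so that by definition $\rho_F$ dominates $\rho$. Wait — we actually need the domination to go the other way: a $1$-Lipschitz $(\rho_F,\rho)$-equivariant map from $\widetilde{C_F}$ to $X$. So first I would build a map $\Phi : \widetilde{C_F} \to X$ directly out of the majorization theorem. On the vertices $\widetilde{V}$ of the triangulation of $\widetilde{C_F}$ the map is forced: send a vertex $\widetilde{v}$ to $F(\widetilde{v}) \in X$, where we have identified the vertex set of the triangulation of $\widetilde{C_F}$ with $\widetilde{V}$ via $p_{\TT}$. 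Since $\ell_F(e) = \abs{F(e_-)F(e_+)}$, each edge $e$ of $\widetilde{C_F}$ has length equal to the distance in $X$ between the images of its endpoints, so the map extends isometrically along edges (recall $\widetilde{C_F}$ is $\cat{-1}$, hence uniquely geodesic, and $F$-images of adjacent vertices are joined by a geodesic in $X$).

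The heart of the argument is to extend $\Phi$ over each face. Fix a face $\widetilde{f}$ of $\widetilde{C_F}$, which by Definition \ref{def:con_hyp_surf} is isometric to a triangle $\Delta_{\widetilde f} \subset \HH^2$ with side lengths $\ell_F(e_1),\ell_F(e_2),\ell_F(e_3)$. The three images $F(e_-)$ of its vertices span a geodesic triangle $\Delta$ in $X$ with exactly these same side lengths. By the rigidity/majorization discussion right before Section \ref{sec:harm_triang} — concretely, the version of Reshetnyak's majorization theorem \ref{th:majorization} for geodesic triangles, which provides a convex $C\subset\MM_{-1}=\HH^2$ with the same side lengths whose boundary $1$-Lipschitz-maps onto the geodesic triangle in $X$ — there is a $1$-Lipschitz map $M_{\widetilde f}$ from the convex hull of $\Delta_{\widetilde f}$ in $\HH^2$ (which is isometric to our face $\widetilde f$) to $X$, sending vertices to the corresponding $F$-images and sending each side to the corresponding geodesic $[F(e_-)F(e_+)]$. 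Define $\Phi$ on $\widetilde f$ to be this $M_{\widetilde f}$. The main point is that these face-maps are compatible along shared edges: on a common edge $e$ of two adjacent faces, both extensions restrict to the (unique, since $X$ is $\cat{-1}$) length parametrization of $[F(e_-)F(e_+)]$, so they glue to a well-defined continuous map $\Phi:\widetilde{C_F}\to X$.

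It remains to check two things. First, $\Phi$ is globally $1$-Lipschitz: it is $1$-Lipschitz on each closed face, and $\widetilde{C_F}$ is a length space (it is a complete geodesic space by \cite[Th. I.7.19]{haefligerMetricSpacesNonpositive1999}, as noted after Definition \ref{def:con_hyp_surf}), so a map that is $1$-Lipschitz on each cell of a locally finite cover by geodesically convex-enough pieces is $1$-Lipschitz for the path metric — one concatenates a path, breaks it at face boundaries, applies the bound on each piece and uses that length in $X$ does not exceed the sum. Second, $\Phi$ is $(\rho_F,\rho)$-equivariant: by construction the vertex-level map $\widetilde v\mapsto F(\widetilde v)$ is $\rho$-equivariant, so for $\gamma\in\Gamma$ the two maps $\Phi\circ\rho_F(\gamma)$ and $\rho(\gamma)\circ\Phi$ agree on all vertices; since the majorization extension over a face depends only on the three side lengths and the three geodesics $[F(e_-)F(e_+)]$ in $X$ — and $\rho(\gamma)$ is an isometry of $X$ carrying $[F(e_-)F(e_+)]$ to $[F(\gamma e_-)F(\gamma e_+)]$ — one can choose the family of majorization maps $\{M_{\widetilde f}\}$ $\Gamma$-equivariantly (pick one per $\Gamma$-orbit of faces and translate), so the two maps agree on every face. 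Hence $\Phi$ is the required domination.

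The main obstacle is the gluing/well-definedness over shared edges together with making the whole family of face-maps simultaneously $\Gamma$-equivariant: the majorization map on a face is not canonically determined by the boundary data (only the boundary parametrization is pinned down), so one must be a little careful to choose representatives per orbit and extend by the action, rather than invoking majorization face-by-face independently. Once the construction is set up $\Gamma$-equivariantly, the Lipschitz estimate and equivariance are routine.
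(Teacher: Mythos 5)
Your proposal is correct and follows essentially the same route as the paper: extend $F$ over each face of $\ti{C_{F}}$ by the comparison map of the corresponding triangle in $X$ (which the majorization theorem extends $1$-Lipschitzly to the whole face), check compatibility along shared edges, and make the face-by-face choices $\Gamma$-equivariantly. One small caveat: at this point $C_{F}$ is not yet known to be of curvature $\le -1$ (that requires harmonicity of $F$ and Proposition \ref{prop:con_angles}), but your parenthetical appeal to $\ti{C_{F}}$ being $\cat{-1}$ is harmless since the uniqueness you actually need is that of the geodesics $[F(e_{-})F(e_{+})]$ in $X$.
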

\begin{proof}
	Consider the
	universal
	covering
	$ Y = \ti{C_{F}} $.
	Remark that
	the triangulation
	$ \ti{\TT} $
	induces
	a triangulation
	$ \TT^{'} = (V', E', F') $
	on $ Y $
	and that
	$ F $ factors through
	$ p_{\TT} $
	and defines a
	map
	$ V' \to X $,
	equivariant
	with respect to the deck
	transformation and $ \rho $.
	Each triangle
	$ \Delta $ of the triangulation
	is,
	by construction,
	the comparison triangle
	of the triangle $ F(\Delta) \subset X $,
	that is the triangle in $ \HH^{2} $
	with the same side lengths.
	Because $ X $ is a $ \cat{-1} $ space,
	the comparison map
	$ \Delta \to F(\Delta) $ 
	(which maps the corresponding
	sides isometrically)
	is 1-Lipschitz.
	If we extend $ F $
	on each triangle
	by the comparison map,
	equivariantly,
	then
	$ F $ defines
	a 1-Lipschitz map
	$ Y \to X $,
	equivariant
	with respesct
	to the desk tranformations
	and $ \rho $.
	Pulling back $ F $
	to $ \ti{S} $
	by $ p_{\TT} $
	gives a
	1-Lipschitz map
	$ \ti{S} \to X $ 
	which is
	$ (\rho_{F}, \rho) $-equivariant.
	It follows that
	$ \rho_{F}  $
	dominates
	$ \rho $.
\end{proof}
When we are in the hypothesis
of proposition \ref{prop:harm_dom}
we say that the
conical structure $ C_{F} $ 
dominates $ \rho $.

We study the
conformal structure
on a non-degenerate
triangulated conical hyperbolic surface.
If $ C = C_{\TT} $ is a
non-degenerate
conical structure on $ S $
with triangulation
$ \TT = (V, E, F) $,
then $ C \setminus V $
is a surface of constant
curvature $ -1 $.
It implies that it
exists a unique
conformal structure
on $ S \setminus V $
compatible
with this metric.
We equip $ C \setminus V $ 
with this structure
of Riemann surface.
It is well known that
we can extend
this structure
on all of $ C $:
\begin{propo}
	\label{prop:conf_struct}
	Using the same notations,
	there exists
	a structure of Riemann
	surface on $ C $
	compatible with
	the one on $ C \setminus V $.
	Denote by $ g $
	the metric tensor
	defining the metric
	on $ C \setminus V $.
	On each point $ z_{0} \in V $,
	there exists a holomorphic
	chart centered at $ z_{0} $
	such that in the coordinate
	$ z $ 
	given by this chart,
	$ g $ has the expression:
	\begin{equation}
		\abs{z}
		^{2(\frac{\theta}{2 \pi}-1)}
		\phi(z)
		\abs{dz}^{2} 
		,
	\end{equation}
	where $ \theta $
	is the conical angle at
	$ z_{0} $ 
	and $ \phi $
	is a positive function.
\end{propo}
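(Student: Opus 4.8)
The plan is to work locally around a single cone point $z_0 \in V$ and build an explicit holomorphic chart in which the metric takes the stated form, then invoke the general removable-singularity principle for Riemann surface structures. First I would analyze the neighbourhood of $z_0$ in $C$: by the non-degeneracy hypothesis, a small punctured neighbourhood of $z_0$ in $C \setminus V$ carries a smooth metric of constant curvature $-1$, and it is homeomorphic to a punctured disc with a cone of total angle $\theta = \theta_{z_0}$. The model for this is the hyperbolic cone metric: on the universal cover of the punctured disc one has the hyperbolic metric, and the cone of angle $\theta$ is obtained by quotienting an angular sector; explicitly, in a suitable complex coordinate $w$ on a punctured disc the cone metric of angle $\theta$ is conformal, $g = \lambda(w)\,\abs{dw}^2$ for a smooth positive conformal factor on the punctured disc. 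The point is that a cone metric of constant curvature is \emph{conformally flat} away from the tip, so the conformal structure on $C \setminus V$ near $z_0$ is the standard one on a punctured disc.

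Next I would produce the coordinate $z$ with $z(z_0) = 0$ by the standard change of variables that "opens up" or "closes" the cone angle. Concretely, one knows (e.g. from the developing map of the constant-curvature cone metric onto $\HH^2$, or from Troyanov's classical analysis of conical metrics) that there is a holomorphic coordinate $z$ near $z_0$, with $z(z_0)=0$, in which the developing map behaves like $z \mapsto z^{\theta/2\pi}$ up to a holomorphic diffeomorphism fixing $0$; pulling back the hyperbolic metric tensor through this map introduces precisely the factor $\abs{z}^{2(\theta/2\pi - 1)}$, the Jacobian of $z\mapsto z^{\theta/2\pi}$, and the remaining smooth positive conformal factor is the function $\phi(z)$, which is positive and continuous (indeed real-analytic) on the full disc including $0$. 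Thus on the punctured disc the metric is $g = \abs{z}^{2(\theta/2\pi - 1)}\phi(z)\,\abs{dz}^2$, which is exactly the claimed expression, and this computation simultaneously shows that the a priori only-punctured chart extends holomorphically across $z_0$: the transition maps between such charts at different vertices are holomorphic on punctured discs, bounded (since they are diffeomorphisms of neighbourhoods in the topological surface $C$), hence extend holomorphically by Riemann's removable singularity theorem. Gluing these charts to the atlas of $C \setminus V$ gives the Riemann surface structure on all of $C$.

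The main obstacle — and the only place where real work is hidden — is the construction of the coordinate $z$ in which the conical singularity of a \emph{constant curvature} $-1$ metric is normalized to the pure power $\abs{z}^{2(\theta/2\pi -1)}$ times a \emph{smooth} (not merely continuous) positive factor, and verifying that $\phi$ does not vanish at $0$. This requires knowing the precise local form of a hyperbolic cone metric; the cleanest route is via the developing map $\mathrm{dev}$ of the $\cat{-1}$ cone into $\HH^2$, which is a local isometry off the cone point and, after composing with a rotation around the image of the cone point, takes the form $z \mapsto \rho(z) z^{\theta/2\pi}$ for a holomorphic $\rho$ with $\rho(0)\neq 0$ in an appropriately chosen coordinate; then $g = \mathrm{dev}^*\,g_{\HH^2}$ and a direct substitution into the formula for the hyperbolic metric in the disc model yields $\phi(z) = \bigl(\tfrac{\theta}{2\pi}\bigr)^2 \abs{\rho(z) + \tfrac{2\pi}{\theta} z\rho'(z)}^2 \cdot \bigl(\text{hyperbolic factor evaluated at } \mathrm{dev}(z)\bigr)$, visibly smooth and positive near $0$. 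Once this normal form is in hand the removable-singularity argument and the gluing are routine, so I would spend the bulk of the write-up on the developing-map computation and cite Troyanov's work on metrics with conical singularities for the background.
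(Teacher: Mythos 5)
The paper does not actually prove this proposition: it is stated as a ``well known'' fact (the standard reference being Troyanov's work on metrics with conical singularities), so there is no internal proof to compare against. Your sketch is exactly the standard argument one would cite, and it is essentially correct: the explicit hyperbolic cone metric $d\rho^{2}+\sinh^{2}\rho\,d\psi^{2}$, $0\le\psi<\theta$, admits the conformal coordinate $z=\bigl(\tanh(\rho/2)\bigr)^{\theta/2\pi}e^{2\pi i\psi/\theta}$, in which
\begin{equation}
	g=\Bigl(\tfrac{\theta}{2\pi}\Bigr)^{2}\,
	\frac{4\,\abs{z}^{2(\theta/2\pi-1)}}{\bigl(1-\abs{z}^{\theta/\pi}\bigr)^{2}}\,
	\abs{dz}^{2},
\end{equation}
which is the claimed normal form with $\phi$ positive and continuous at $0$; this simultaneously shows that the punctured neighbourhood of the cone point is conformally a punctured disc (not a nondegenerate annulus), which is the point needed to fill in the puncture and obtain the Riemann surface structure on all of $C$. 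Two small caveats. First, your claim that $\phi$ is real-analytic (or even smooth) at $0$ is false in general: the factor $\bigl(1-\abs{z}^{\theta/\pi}\bigr)^{-2}$ is only H\"older at the origin unless $\theta/\pi$ is an even integer. The proposition only asserts positivity, and positivity plus continuity is all that the subsequent application of Minda's strong Ahlfors lemma requires, so nothing breaks --- but you should not assert more regularity than you have. Second, it is worth making explicit that the conformal type of the punctured neighbourhood must be identified as a punctured disc before invoking removable singularities; your developing-map computation does supply this, but the logical role of that step should be stated rather than left implicit.
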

Remark that when the conical
structure has curvature $ \le -1 $,
the metric tensor does not
blow up at the vertices
of the triangulation
(because $ \theta \ge 2 \pi $).

We will use the following
strong version of
Ahlfors' lemma
\cite{ahlforsExtensionSchwarzLemma1938}
due to Minda
to compare two conformal metrics:
\begin{theorem}
	[\cite{mindaStrongFormAhlfors1987}]
	\label{th:ahlfor_lemma}
	Let $ \Sigma $
	be a Riemann surface
	and $ g_{0} $ the conformal metric
	with constant curvature $ -1 $.
	If $ g $ is another conformal
	metric, allowed to vanish,
	with curvature $ \le -1 $
	at the points where it does not
	vanish then
	$
		g < g_{0}
	$
	everywhere or else
	$
		g = g_{0} 
	$
	everywhere.
\end{theorem}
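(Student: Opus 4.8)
The statement is Minda's sharpening of the Ahlfors--Schwarz lemma, and the natural route is the classical one: reduce to the unit disc, run a maximum-principle comparison to get the weak inequality, and then upgrade it to the dichotomy with the strong maximum principle. First I would pass to the universal cover $\pi:\D\to\Sigma$ (its existence, together with that of $g_0$, presupposes $\Sigma$ is of hyperbolic type). The complete metric $g_0$ pulls back to the Poincaré metric $\lambda_0^{2}\,\abs{dz}^{2}$, $\lambda_0(z)=\tfrac{2}{1-\abs{z}^{2}}$, which satisfies $\Delta\log\lambda_0=\lambda_0^{2}$ with $\Delta=4\partial_z\partial_{\bar z}$; and $g$ pulls back to $\lambda^{2}\abs{dz}^{2}$ with $\lambda\ge 0$ continuous and Gaussian curvature $\le -1$ where $\lambda>0$, i.e.\ $\Delta\log\lambda\ge\lambda^{2}$ there. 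The quotient $\lambda^{2}/\lambda_0^{2}$ is invariant under the deck group, hence descends to a continuous function $u:\Sigma\to[0,\infty)$, and the problem becomes: show $u<1$ everywhere, or $u\equiv 1$.

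For the weak inequality $u\le 1$ I would exhaust $\D$ by discs $\D_r=\{\abs{z}<r\}$, $r\uparrow 1$, with hyperbolic metrics $\lambda_r^{2}\abs{dz}^{2}$. On each $\D_r$ the ratio $u_r=\lambda^{2}/\lambda_r^{2}$ is continuous and tends to $0$ as $\abs{z}\to r$, since $\lambda$ is bounded on $\overline{\D_r}\subset\D$ while $\lambda_r$ blows up at the boundary; so $u_r$ attains its maximum at an interior point $p$. If $\lambda(p)=0$ then $u_r\equiv 0$; otherwise $v_r=\log u_r=2\log\lambda-2\log\lambda_r$ satisfies $\Delta v_r\ge 2(\lambda^{2}-\lambda_r^{2})$, and $\Delta v_r(p)\le 0$ at the interior maximum forces $\lambda(p)\le\lambda_r(p)$, hence $u_r(p)\le 1$, hence $u_r\le 1$ on $\D_r$. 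Letting $r\uparrow 1$ gives $\lambda\le\lambda_0$, i.e.\ $u\le 1$ on $\Sigma$.

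Now on the open set $\{u>0\}\supset\{u=1\}$ the function $v=\log u$ is finite, and subtracting the two curvature identities gives
\begin{equation}
	\Delta v \;\ge\; 2(\lambda^{2}-\lambda_0^{2}) \;=\; 2\lambda_0^{2}\bigl(e^{v}-1\bigr).
\end{equation}
Writing $e^{v}-1=v\,\psi(v)$ with $\psi>0$ continuous ($\psi(0)=1$), this reads $\Delta v-c\,v\ge 0$ with $c=2\lambda_0^{2}\psi(v)\ge 0$ locally bounded, so $v$ is a subsolution of the elliptic operator $\Delta-c$ with $c\ge 0$. Since $v\le 0$ on all of $\Sigma$ (previous step), if $A:=\{u=1\}=\{v=0\}$ is nonempty then at any $p_0\in A$ the subsolution $v$ attains the nonnegative value $0$ at an interior point, so Hopf's strong maximum principle yields $v\equiv 0$ near $p_0$. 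Thus $A$ is open; it is also closed by continuity of $u$; connectedness of $\Sigma$ then forces $A=\emptyset$, i.e.\ $g<g_0$ everywhere, or $A=\Sigma$, i.e.\ $g\equiv g_0$.

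The only real obstacle is regularity: $\lambda$ is merely continuous and may vanish, so the Laplacian inequalities and the second-derivative test at an interior maximum do not hold classically. The standard fix is Heins' framework of SK-metrics, in which ``$K\le -1$'' means precisely that $\log\lambda$ is subharmonic relative to $\lambda^{2}$ (so $\Delta\log\lambda\ge\lambda^{2}$ holds distributionally on $\{\lambda>0\}$ and $\log\lambda$ is subharmonic across $\{\lambda=0\}$); under this hypothesis both the comparison at an interior maximum and the strong maximum principle remain valid, after an approximation or upper-semicontinuous-regularization argument. Since the statement is quoted from \cite{mindaStrongFormAhlfors1987} one may of course simply cite it; and in the use we make of it the required regularity is automatic, because the conical hyperbolic metric of Proposition~\ref{prop:conf_struct} is real-analytic off the vertices and near a vertex of cone angle $\theta\ge 2\pi$ one has $\log\lambda=(\tfrac{\theta}{2\pi}-1)\log\abs{z}+\tfrac12\log\phi$, which is subharmonic, so the argument above applies verbatim.
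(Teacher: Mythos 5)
The paper does not prove this statement at all: it is imported verbatim from Minda's article \cite{mindaStrongFormAhlfors1987} and used as a black box, so there is no internal proof to compare against. What you have written is a genuine proof, and it is the standard (indeed, essentially Minda's own) Ahlfors--Heins argument: pull back to the universal cover $\D$, obtain the weak inequality $\lambda\le\lambda_{0}$ by exhausting with discs $\D_{r}$ and comparing with the rescaled Poincar\'e metrics at an interior maximum of the ratio, then upgrade to the strict dichotomy by observing that $v=\log(\lambda^{2}/\lambda_{0}^{2})$ is a nonpositive subsolution of $\Delta-c$ with $c\ge0$ on $\{\lambda>0\}$, so that $\{v=0\}$ is open by the strong maximum principle and closed by continuity. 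All of these steps are correct, and you correctly identify the one genuine technical obstacle, namely that $\lambda$ is only continuous and may vanish, so both the second-derivative test and Hopf's principle must be run in Heins's SK-metric (distributional subharmonicity) framework rather than classically; your observation that in the paper's actual application the metric is real-analytic off the cone points and has distributional Laplacian bounded below by $\lambda^{2}$ across a cone point of angle $\ge2\pi$ (the extra term being a nonnegative multiple of $\delta_{0}$) is exactly what makes the argument go through there. The only thing to add for completeness is that the hypothesis implicitly requires $\Sigma$ to be of hyperbolic type for $g_{0}$ to exist, which you note and which holds in the paper since $\Sigma$ is a closed surface of genus $\ge2$. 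In short: the paper buys the result by citation; you supply a correct self-contained proof of it.
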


\section{Proof of the theorem}
\label{sec:proof}

In this section we prove
the main theorem \ref{th:main}.
Recall that $ S $ is a topological surface,
$ X $ is a $ \cat{-1} $ space,
$ \rho : \Gamma \to \Isom (X) $
a representation from the fundamental group
of $ S $ to the isometry group of $ X $.

Start with a triangulation
$ \TT = (V, E, F) $
of $ S $
with only one vertex.
If the representation $ \rho $
fixes a point on the boundary
at infinity
$ \partial_{\infty} X $,
the argument of
\cite[sec. 2.1]{deroinDominatingSurfaceGroup2016}
allows us to conclude that
$ \rho $ satisfies the conclusion
of the theorem.
If $ \rho $ does not
fix a point of
$ \partial_{\infty} X $,
we apply
theorem \ref{th:harm_exist}
which gives the existence
of an harmonic, $ \rho $-equivariant
$ F : \ti{V} \to X $.

Consider the metric triangulation
$ (\TT, \ell_{F}) $
and conical structure
$ C_{F} $ induced on $ S $
by $ F $.
We first suppose that $ C_{F} $
is non-degenerate.
Applying proposition \ref{prop:con_angles},
we know that all the conical angles
of $ C_{F} $ are $ \ge 2 \pi $.
By theorem \ref{th:cond_hyperbolic}
this means that $ C_{F} $
is of curvature $ \le -1 $.
According to proposition
\ref{prop:harm_dom},
the representation
$ \rho_{F} $ dominates
$ \rho $.
By proposition
\ref{prop:conf_struct},
$ C_{F} $ can be equiped
with a structure
of Riemann surface
$ \Sigma $ 
such that the tensor
$ g $ defining
the conical metric
is conformal
( $ g $ is allowed to vanish
at the vertices of the triangulation)
and of curvature $ -1 $
where it does not vanish.
Let $ g_{0} $ be the conformal metric
on $ S $ with constant curvature $ -1 $.
The metrics $ g $ and $ g_{0} $
satisfies the hypothesis
of theorem \ref{th:ahlfor_lemma},
so we either have $ g < g_{0} $
or $ g = g_{0} $ everywhere.

If $ g = g_{0} $ it means that
$ C_{F} $ is a (non-conical)
hyperbolic surface and that
$ \rho_{F} $ is a Fuchsian representation.
As we have seen that $ \rho_{F} $
dominates $ \rho $, we have proved
that $ \rho $ is dominated
by a Fuchsian representation.
By proposition \ref{prop:conf_struct},
because $ g $ does not vanish,
the conical angles of $ C_{F} $
are all equals to $ 2 \pi $
and
we prove in section \ref{sec:rigidity}
that it implies that the
domination extends to an isometric embedding.

If $ g < g_{0} $,
it means that the identity map
$ (S, g_{0}) \to (S, g) $
is c-Lipschitz
for some $ c < 1 $.
Denoting by $ j $ the Fuchsian
representation which is the
holonomy of the hyperbolic surface
$ \Sigma $, it means that
$ j $ strictly dominates $ \rho_{F} $.
Composing the domination
between $ j $ and $ \rho_{F} $
and between $ \rho_{F} $ and
$ \rho $ gives a domination
between $ j $ and $ \rho $.
This domination is strict
because the one between
$ j $ and $ \rho_{F} $
is strict.

We have proved the theorem,
assuming that $ C_{F} $
is non-degenerate.
We treat the case where
$ C_{F} $ is degenerate
in section
\ref{sec:desing}.
% If it is not the case,
% in section \ref{sec:desing},
% we show that we can either
% find
% a non-degenerate conical structure
% $ C' $ which strictly dominates
% $ C_{F} $,
% and whose conical angles
% are all $ > 2 \pi $
% or
% modifify the triangulation
% such that the new conical
% structure $ C' $ is non-degenerate,
% have a conical angle of $ 2 \pi $
% and is isometric to $ C_{F} $.
% Then we can conclude with
% the same argument,
% replacing $ C_{F} $
% with $ C' $.

\section{Desingularization}
\label{sec:desing}

In this section we show
that given a degenerate
conical structure
$ C_{\TT} $
on $ S $,
with either no edges flattened
and conical angle $ \ge 2 \pi $
or some edge flattened,
we have two cases:
etiher
we can
find a non-degenerate
conical structure
$ C' $ on $ S $
which dominates
$ C_{\TT} $,
whose conical angle
is $ > 2 \pi $ 
(and the domination preserves the triangulation)
% or modifify the triangulation
% such that the new conical structure
% is non-degenerate and isometric
% to $ C_{\TT} $.
or no edge is flattened,
the conical angle is $ 2 \pi $
and exactly one face is flattened.
In the first case,
we can apply the argument
of section
\ref{sec:proof}
to $ C' $
and in the second case,
we can apply directly apply
the rigidity argument
of section
\ref{sec:rigidity}
to $ C_{\TT} $.
In both cases, this proves
theorem
\ref{th:main}.

Denote by $ (\TT, \ell) $
the metric triangulation
defining the conical structure.
The idea is to perturb
$ \ell $ so that
it becomes non-degenerate.
First we state a result
which allows us to
perturb individual triangles.
\begin{propo}
	\label{prop:desing_triangles}
	Let $ \Delta $
	be an hyperbolic triangle
	with sides
	$ (l_{1},
	l_{2}, 
	l_{3})  $
	and $\varepsilon > 0$.
	Define $ \Delta_{\varepsilon} $
	to be the hyperbolic triangle
	with sides
	$ (l_{1}+\varepsilon,
	l_{2}+\varepsilon, 
	l_{3}+\varepsilon)  $
	Then there exists
	a 1-Lipschitz
	map $ \Delta_{\varepsilon} \to \Delta $
	which maps vertex to corresponding vertex
	and side to corresponding side.
	The restriction of this map
	to each side depends only
	of the length of this side
	and $ \varepsilon $.
\end{propo}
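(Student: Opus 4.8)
The plan is to work side by side: since the desired map must restrict, on each side, to a map depending only on the length of that side and on $\varepsilon$, I will first construct the correct map between two hyperbolic segments of lengths $l+\varepsilon$ and $l$, and then check that these three side-maps can be assembled into a $1$-Lipschitz map on the whole triangle. On a single side, the natural candidate is the "affine" reparametrization: identify $[0,l+\varepsilon]$ with $[0,l]$ by $t \mapsto \tfrac{l}{l+\varepsilon} t$, which is $\tfrac{l}{l+\varepsilon}$-Lipschitz, hence $1$-Lipschitz, and it fixes endpoints. This is canonical and depends only on $l$ and $\varepsilon$, as required. So the content is entirely in extending this prescribed boundary behaviour to the interior of $\Delta_\varepsilon$ without increasing distances.

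For the extension I would use a projection-type construction, exploiting convexity in $\cat{-1}$ (indeed in $\HH^2$). Concretely, one approach: put $\Delta \subset \HH^2$ as a convex set, let $\pi : \HH^2 \to \Delta$ be the $1$-Lipschitz projection onto the convex set $\Delta$ (available since $\HH^2$ is $\cat{0}$, by the discussion in Section~\ref{sec:notions}), and build a $1$-Lipschitz map $\Delta_\varepsilon \to \HH^2$ whose image lies in $\Delta$ and which has the prescribed boundary. A cleaner route, and the one I expect to work, is to use the majorization theorem \ref{th:majorization} directly: the boundary $\partial \Delta_\varepsilon$ maps, by the three affine side-maps, onto a closed geodesic polygon (in fact a geodesic triangle) in $\HH^2$ — namely $\partial \Delta$ traversed once — of the same combinatorial type; Reshetnyak's majorization theorem then produces a convex region majorizing that curve together with a $1$-Lipschitz majorizing map, and the refined version sending vertices to vertices (cited after Theorem~\ref{th:majorization}) identifies the majorizing convex region with $\Delta_\varepsilon$ itself and its boundary parametrization with the prescribed one. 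That map $M : \Delta_\varepsilon \to \HH^2$ is exactly what we want: $1$-Lipschitz, vertex-to-vertex, side-to-side, with side restrictions depending only on the side length and $\varepsilon$.

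The main obstacle is verifying that the majorizing convex region can be taken to be (isometric to) $\Delta_\varepsilon$ and not merely some other convex set with a boundary curve of the right lengths. The majorization theorem gives a convex $C \subset \HH^2$ whose boundary is a geodesic polygon with the same side lengths as $\partial \Delta_\varepsilon$; but a geodesic \emph{triangle} in $\HH^2$ is determined up to isometry by its three side lengths, so any such $C$ is isometric to $\Delta_\varepsilon$, and under this isometry the boundary parametrization is the length parametrization, i.e.\ the affine side-maps composed with the side isometries $\partial\Delta_\varepsilon \to \partial\Delta$. (One should double-check the degenerate sub-case where $\Delta$ is flat: then $\Delta$ is a segment, $\pi$ is still $1$-Lipschitz, and the affine side-maps still assemble correctly; alternatively one treats this case by hand.) Thus I would: (i) define the affine side-maps and record their Lipschitz constant and dependence; (ii) observe the three composed side-maps parametrize $\partial \Delta$ as a closed curve of length $< 2D_{-1}=\infty$; (iii) apply the vertex-preserving form of Theorem~\ref{th:majorization} to get $M$; (iv) use the side-length rigidity of hyperbolic triangles to identify the domain of $M$ with $\Delta_\varepsilon$ and conclude.
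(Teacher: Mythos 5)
There is a genuine gap, in fact two. First, the prescribed boundary data is wrong: the three affine side--maps do not in general assemble into a $1$-Lipschitz map $\partial \Delta_{\varepsilon} \to \partial \Delta$ for the ambient metric, so no $1$-Lipschitz extension to $\Delta_{\varepsilon}$ with that boundary behaviour can exist. Concretely, take $\Delta$ equilateral with side $L$; its angle $\alpha(L)$ satisfies $\sin(\alpha(L)/2)=\tfrac{1}{2\cosh(L/2)}$. Two points of $\Delta_{\varepsilon}$ at arc length $s$ from a common vertex on the two adjacent sides are at distance $\approx 2s\sin(\alpha(L+\varepsilon)/2)$, while their affine images are at distance $\approx 2\tfrac{L}{L+\varepsilon}s\sin(\alpha(L)/2)$; the ratio tends to $\tfrac{L}{L+\varepsilon}\cdot\tfrac{\cosh((L+\varepsilon)/2)}{\cosh(L/2)} \to e^{\varepsilon/2}>1$ as $L\to\infty$, so for $L$ large the map expands distances near the vertices. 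Second, the appeal to Reshetnyak's majorization theorem is misdirected: the theorem produces a convex $C$ whose boundary is an \emph{arc-length} parametrization of the given curve, hence $\partial C$ has the same total length as that curve. Applied to $\partial\Delta$ (perimeter $l_{1}+l_{2}+l_{3}$), the polygon version returns $C\cong\Delta$, not $\Delta_{\varepsilon}$; there is no mechanism in the theorem for the domain to be a strictly larger triangle mapping onto a shorter boundary curve, which is exactly what the proposition requires.

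The actual proof goes the other way around: instead of shrinking $\Delta_{\varepsilon}$ onto $\Delta$ along its boundary, one enlarges $\Delta$. Glue a segment (a ``whisker'') of length $\varepsilon/2$ to each vertex of $\Delta$; by Reshetnyak's gluing theorem the resulting space $\Delta_{\varepsilon}^{*}$ is $\cat{-1}$, and the endpoints $x',y',z'$ of the whiskers span a genuine geodesic triangle in $\Delta_{\varepsilon}^{*}$ with side lengths $(l_{i}+\varepsilon)$, whose comparison triangle in $\HH^{2}$ is exactly $\Delta_{\varepsilon}$. The $\cat{-1}$ comparison map $\Delta_{\varepsilon}\to\Delta_{\varepsilon}^{*}$ (extended to the convex hull via majorization, legitimately this time since the side lengths match) is $1$-Lipschitz, and composing with the $1$-Lipschitz projection onto the convex subset $\Delta\subset\Delta_{\varepsilon}^{*}$, which crushes each whisker to its vertex, yields the desired map. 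Note that the resulting side-maps are not affine: they are the projections $[0,l_{i}+\varepsilon]\to[\varepsilon/2,\,l_{i}+\varepsilon/2]$, isometric in the middle and constant on the two end segments of length $\varepsilon/2$, which is consistent with the last clause of the statement and avoids the vertex-expansion problem above.
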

We say that $ \Delta_{\varepsilon} $
dominates $ \Delta $.
\begin{proof}
	Let $ x,y,z $
	be the vertices of $ \Delta $.
	Consider the space
	$ \Delta_{\varepsilon}^{*} $ 
	obtained by gluing
	a segment of length $ \varepsilon/2 $
	to each vertex of $ \Delta $
	(see figure \ref{fig:triangle_epsilon}).
	Call $ x',y',z' $ the
	extremities of these segments.
	By Reshetnyak's gluing theorem
	this is a $ \cat{-1} $ space.
	The map that crushes
	these segments to the corresponding
	vertex is a 1-Lipschitz
	$ \Delta_{\varepsilon}^{*} \to \Delta $,
	because it is the projection
	on the convex $ \Delta \subset \Delta_{\varepsilon}^{*} $.

	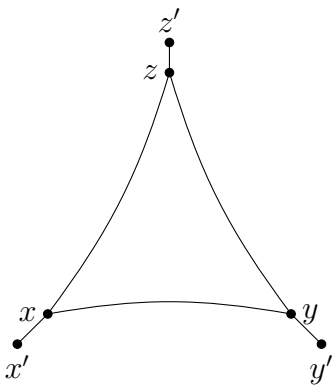
\begin{figure}[h]
		\centering
		\begin{tikzpicture}[scale=0.8]
\draw (0,0) node[anchor=east]{$x$}
% ..controls (2,0.5) and (2,0.5).. (4,0) node[anchor=west]{$y$}
to[bend left=10] (4,0) node[anchor=west]{$y$}
to[bend left=10] (2, 4) node[anchor=east]{$z$}
to[bend left=10] cycle;
\draw (0,0)
	-- (-0.5,-0.5) node[anchor=north]{$x'$};
\draw (4,0)
	-- (4.5,-0.5) node[anchor=north]{$y'$};
\draw (2,4)
	-- (2,4.5) node[anchor=south]{$z'$};
\filldraw [black] (0,0) circle (2pt);
\filldraw [black] (4,0) circle (2pt);
\filldraw [black] (2,4) circle (2pt);
\filldraw [black] (-0.5,-0.5) circle (2pt);
\filldraw [black] (4.5,-0.5) circle (2pt);
\filldraw [black] (2,4.5) circle (2pt);

% \draw (0,0) node[anchor=east]{$x$}
% -- (4,0) node[anchor=west]{$y$}
% -- (2,4) node[anchor=east]{$z$}
% -- cycle;
% \draw (0,0)
	% -- (-0.5,-0.5) node[anchor=north]{$x'$};
% \draw (4,0)
	% -- (4.5,-0.5) node[anchor=north]{$y'$};
% \draw (2,4)
	% -- (2,4.5) node[anchor=south]{$z'$};
% \filldraw [black] (0,0) circle (2pt);
% \filldraw [black] (4,0) circle (2pt);
% \filldraw [black] (2,4) circle (2pt);
% \filldraw [black] (-0.5,-0.5) circle (2pt);
% \filldraw [black] (4.5,-0.5) circle (2pt);
% \filldraw [black] (2,4.5) circle (2pt);
\end{tikzpicture} 
		\caption{The triangle $ \Delta_{\varepsilon}^{*} $ }
		\label{fig:triangle_epsilon}
	\end{figure}

	Now, consider the triangle
	with vertices $ x',y',z' $
	in $ \Delta_{\varepsilon}^{*} $.
	This is a triangle with sides
	$ (l_{1}+\varepsilon,
	l_{2}+\varepsilon, 
	l_{3}+\varepsilon) $.
	Applying the $ \cat{-1} $
	inequality
	we have that
	the comparison map
	between the comparison
	hyperbolic triangle
	$ \Delta_{\varepsilon} $ 
	and the
	triangle
	$ \Delta_{\varepsilon}^{*} $
	is
	1-Lipschitz.
	Composing the comparison map
	$ \Delta_{\varepsilon} \to
	\Delta_{\varepsilon}^{*} $
	and the crushing map
	$ \Delta_{\varepsilon}^{*}
	\to
	\Delta$
	we get the desired 1-Lipschitz
	map
	$ \Delta_{\varepsilon} \to \Delta $.
	Its restriction to each side
	is the projection
	on the convex subset
	$ [\varepsilon/2, l_{i}  + \varepsilon/2] $ 
	of
	$ [0, l_{i}  + \varepsilon] $.

\end{proof}
Observe that if we apply
this proposition to a flat
triangle $ \Delta $,
then $ \Delta_{\varepsilon} $ is not flat
for any $ \varepsilon > 0 $.
Also, the angles of $ \Delta_{\varepsilon} $
are as close as desired to the angles
of $ \Delta $ when $ \varepsilon $
is small enough.
If we apply this proposition
to every triangle of
the triangulation we get:
\begin{propo}
	\label{prop:desing_glob}
	Let $ (\TT, \ell) $
	be a metric triangulation,
	with associated conical structure
	$ C $,
	$\varepsilon > 0$
	and
	let $ (\TT, \ell_{\varepsilon}) $
	be the metric triangulation
	where $ \ell_{\varepsilon} = \ell + \varepsilon $,
	with associated conical structure
	$ C' $.
	Then
	$ (\TT, \ell_{\varepsilon}) $
	does not flatten any triangle
	and we have a
	1-Lipschitz map
	$ C' \to C $
	.
\end{propo}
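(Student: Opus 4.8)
The plan is to apply Proposition \ref{prop:desing_triangles} simultaneously to every face of the triangulation and glue the resulting maps. First I would observe that since $\ell_\varepsilon = \ell + \varepsilon$ strictly increases every side length, for each face $f$ with side lengths $(l_1, l_2, l_3)$ the new face has side lengths $(l_1+\varepsilon, l_2+\varepsilon, l_3+\varepsilon)$, which is exactly the situation of Proposition \ref{prop:desing_triangles}; in particular the triangle inequality for $\ell_\varepsilon$ follows from that for $\ell$ (adding $\varepsilon$ to all three sides preserves it), and moreover the inequality becomes strict, so $(\TT, \ell_\varepsilon)$ flattens no face. This also requires checking that $\ell_\varepsilon$ still satisfies the hypotheses needed to define a conical structure $C'$, which is immediate.

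Next I would construct the map $C' \to C$ face by face. For each face $f$, let $\Delta_f$ and $\Delta_f^\varepsilon$ denote the hyperbolic triangles realizing the $\ell$- and $\ell_\varepsilon$-lengths; Proposition \ref{prop:desing_triangles} gives a $1$-Lipschitz map $\Delta_f^\varepsilon \to \Delta_f$ sending vertices to vertices and sides to sides, whose restriction to each side depends only on that side's $\ell$-length and on $\varepsilon$. The key point is that this last property guarantees the face maps are compatible along shared edges: if two faces $f, f'$ share an edge $e$, both restricted maps on $e$ are the same map (the projection of $[0, \ell(e)+\varepsilon]$ onto $[\varepsilon/2, \ell(e)+\varepsilon/2]$), so the face maps glue to a well-defined continuous map $C' \to C$. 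Since each piece is $1$-Lipschitz and $C'$, $C$ are length spaces with these triangulations, the glued map is $1$-Lipschitz on $C'$: any curve in $C'$ decomposes into arcs lying in individual faces, its length is the sum of the arc lengths, and each arc's image has length no greater, so the image curve is no longer; taking infima of lengths of curves between two points gives the distance inequality.

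The main obstacle is the gluing/well-definedness step: one must be careful that the restriction of the map from Proposition \ref{prop:desing_triangles} to a side genuinely depends \emph{only} on the length of that side and $\varepsilon$, and not on the shape of the triangle — but this is exactly what that proposition asserts (the side map is the projection onto a fixed convex subinterval), so the faces agree on overlaps with no further work. A secondary point to handle cleanly is passing from the piecewise $1$-Lipschitz estimate to a genuine $1$-Lipschitz estimate for the distance function on $C'$; this is standard for maps between geodesic (simplicial) metric spaces that are $1$-Lipschitz on each cell, using that distances are computed as infima of lengths of paths and that any path can be subdivided according to the cells it passes through.
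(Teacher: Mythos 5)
Your proposal is correct and follows exactly the route the paper intends: the paper gives no separate proof of Proposition \ref{prop:desing_glob}, simply stating that it follows by applying Proposition \ref{prop:desing_triangles} to every face, and your argument fills in precisely the right details (strictness of the triangle inequality after adding $\varepsilon$, compatibility of the side maps because they depend only on the side length and $\varepsilon$, and the passage from piecewise to global $1$-Lipschitz via the length metric).
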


\subsubsection{No edge flattened}
\label{ssub:no_edge_flattened}

We first suppose that
$ C = C_{F} = C_{\TT} $ is degenerate
but does not
flatten edges.
Remember that the triangulation
has only one vertex.
Because $ C_{\TT} $
is degenerate,
some triangle
is flattened.
We separate two cases:
the conical angle
is
$ > 2 \pi $
or it is $ 2 \pi $.

Consider the first case.
We apply proposition
\ref{prop:desing_glob}
to $ (\TT, \ell) $
with some $ \varepsilon > 0 $.
The conical structure
$ C' $ associated
to the metric triangulation
$ (\TT, \ell_{\varepsilon}) $
is non-degenerate
and we have
a $ 1 $-Lipschitz
map $ C' \to C $.
This means that
$ C' $ dominates $ C $.
If $ \varepsilon $ is small
enough, the conical angle
of $ C' $ is close enough
to the conical angle of $ C $,
so it is $ > 2 \pi $.
This proves the result in this case.

Consider the second case.
A flat triangle
has angles
$ (\pi, 0, 0) $.
As all of these
angles meet at
the only vertex,
they contribute
to the conical angle
with $ \pi $.
If we had at least
2 flat triangles,
they would
contribute to the conical angle
with $ 2 \pi $.
As there is at least
one other triangle
which contribute
with a positive angle,
the conical angle
would be $ > 2 \pi $.
As we assumed
that the conical angle
is $ 2 \pi $,
it means that there is
only one flat triangle.
We can apply the
rigidity argument of
section
\ref{sec:rigidity}
to $ C_{\TT} $.

\subsubsection{Some edge flattened}
\label{ssub:some_edge_flattened}

Suppose now that
$ C $ is degenerate,
and flatten some edges.
We can suppose that
at least one edge
is not flattened.
Otherwise, it means that
$ F $ is constant,
and the unique point in its image
is fixed by the representation $ \rho $.
In this case, $ \rho $ is trivially dominated
by any Fuchsian representation.

Because at least one edge is not flattened,
there exists a
triangle $ \Delta $
with side lengths
$ (a, a, 0) $ for some
$ a > 0 $.
We apply proposition
\ref{prop:desing_glob}
to $ (\TT, \ell) $
with some $ \varepsilon > 0 $.
The conical structure
$ C' $ associated
to the metric triangulation
$ (\TT, \ell_{\varepsilon}) $
is non-degenerate
and we have
a $ 1 $-Lipschitz
map $ C' \to C $.
This means that
$ C' $ dominates $ C $.
It remains to
show that the conical angle
of $ C' $ is $ > 2 \pi $.

The triangle $ \Delta $
with sides $ (a,a,0) $
in $ (\TT, \ell) $
gives the triangle
$ \Delta_{\varepsilon} $ 
with sides
$ (a+\varepsilon,a+\varepsilon,\varepsilon) $
in $ (\TT, \ell_{\varepsilon}) $.
An application of the
hyperbolic law of cosines
show that when $ \varepsilon $
goes to 0, the angles
of $ \Delta_{\varepsilon} $ 
go to $ (\frac{\pi}{2}, \frac{\pi}{2}, 0) $.
So the triangle $ \Delta_{\varepsilon} $
contibutes with $ \pi + o(1) $
to the conical angle of $ C' $,

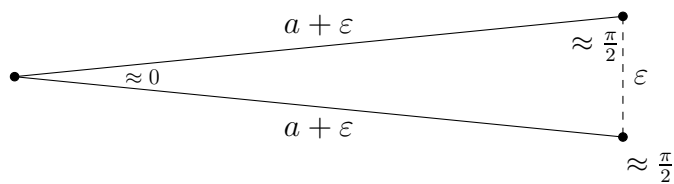
\begin{figure}[h]
	\centering
	\begin{tikzpicture}[scale=0.8]
	\coordinate (P) at (0,0);
	\coordinate (Q) at (10,1);
	\coordinate (R) at (10,-1);

	% comparison triangle
	\draw (P)
	-- node[above]{$a+\varepsilon$}
	(Q);
	\draw[dashed] (Q)
	-- node[right]{$\varepsilon$}
	 (R);
	\draw (R)
	-- node[below]{$a+\varepsilon$}
	(P);

	% \tkzMarkAngle[size=0.6cm](P,Q,R)
	\tkzLabelAngle[pos = 0.75, scale=0.9](P,Q,R){$\approx \frac{\pi}{2}$}

	\tkzLabelAngle[pos = 0.75, scale=0.9](P,R,Q){$\approx \frac{\pi}{2}$}

	\tkzLabelAngle[pos = 3, scale=0.7](R,P,Q){$\approx 0$}

	\filldraw [black] (P) circle (2pt);
	\filldraw [black] (Q) circle (2pt);
	\filldraw [black] (R) circle (2pt);

\end{tikzpicture} 
	\caption{The perturbation of a triangle with an edge flattened}
	\label{fig:edge_flattened}
\end{figure}

Consider the triangle $ \Delta' $
which share with $ \Delta $ the
edge of length $ 0 $.
We have two case: either
$ \Delta $ has sides
$ (0,0,0) $
or it has sides $ (b,b,0) $
with $ b > 0 $.
If it has sides 
$ (0,0,0) $,
then $ \Delta'_{\varepsilon} $
has sides
$(
\varepsilon,
\varepsilon,
\varepsilon
)$,
so its angles
go to
$(
\frac{\pi}{3},
\frac{\pi}{3},
\frac{\pi}{3}
)$
when $ \varepsilon $
goes to 0.
The triangle $ \Delta'_{\varepsilon} $ 
contributes
$ \pi - o(1) $
to the conical angles of $ C' $.
If it has sides $ (b,b,0) $,
we argue as in the previous
paragraph,

Consider the triangle
$ \Delta^{''} $
which share with $ \Delta $
one of the two
edge of length $ a $.
The sum of the angles
of this triangle is
some $ \alpha > 0 $.
The triangle $ \Delta^{''}_\varepsilon $
has the sum of its angles
which goes to $ \alpha $
when $ \varepsilon $
goes to 0.
So 
the triangle $ \Delta^{''}_\varepsilon $
contributes with $ \alpha + o(1) $
to the conical angle of $ C' $.

Summing all of these contributions
we have that the conical angle
of $ C' $ is at least
$ 2 \pi + \alpha + o(1) $
when $ \varepsilon $ goes to
0.
For $\varepsilon$
small enough,
this quantity is $ > 2 \pi $.

\section{Rigidity case}
\label{sec:rigidity}

We first assume
that $ X $ is proper,
that is the closed ball
are compact.
% Using the existence theorem,
% we construct the harmonic map $F$
% associated to the Riemann triangulation
% with only one vertex.
We suppose that we are in the case
where $ C_{F} $ is either non-degenerate,
or flatten at most one face and no edge,
and that the conical angle is $ 2 \pi $.
Let $ \xi $ be one vertex in the
triangulation lifted to the universal cover
and $ (\eta_{i}) $ its neighbours,
ordered such that the faces of the triangulations
adjacent to $ \xi $ 
are $ [\eta_{i} \xi \eta_{i+1}]$
(we consider the indices $ i $
cyclically).
We denote $ x = F(\xi) $ and $ y_{i} = F(\eta_{i}) $.

The conical angle of
the conical hyperbolic surface
equals $ 2 \pi $,
that is the sum of the comparison
angles of the triangles at $ x $
equals $ 2 \pi $.
We show that in this case, $ F $
is in fact an isometry.

Let 
$\Delta_{i} = [x, y_{i}, y_{i+1}]$
be the triangles adjancent to $ x $.
We denote
$\alpha_{i} =
\angle_{x}
(y_{i}, y_{i+1})$
and $\ti{\alpha_{i}}
=
\ti{\angle}_{x}^{(-1)} 
(y_{i}, y_{i+1})
$
the corresponding comparison angle.
Our hypothesis is
\begin{equation}
	\sum_{i}
	\ti{\alpha_{i}}
	=
	2 \pi
	.
\end{equation}
Because $ F $ is harmonic we have
$
\sum_{i}
\alpha_{i}
\ge
2 \pi
$
and we always have
$\ti{\alpha_{i}} \ge \alpha_{i}$.
Then:
\begin{equation}
	2 \pi
	=
	\sum_{i}
	\ti{\alpha_{i}}
	\ge
	\sum_{i}
	\alpha_{i}
	\ge
	2 \pi
	,
\end{equation}
and all inequalities become
equalities, so
$ \alpha_{i} = \ti{\alpha_{i}} $.
We are in the rigidity case
of the $ \cat{-1} $ inequality
for $ \Delta_{i} $ so the
comparison map from
$ \ti{\Delta_{i}} $
to $ \Delta_{i} $
is an isometry.
By construction, $ F $
is an isometry in restriction
to each $ \Delta_{i} $.

Now, it is enough
to show that
the union of triangles
$ \Delta_{i} \cup \Delta_{i+1} $
in $ X $ 
is isometric via $ F $ to the union
of the comparison triangles
$ \ti{\Delta_{i}} \cup \ti{\Delta_{i+1}} $
in $ \HH^{2} $,
as it would imply that $ F $
is a global isometry.

We show this result,
assuming first that
\begin{equation}
	\alpha_{i} + \alpha_{i+1}
	=
	\angle_{x}
	(y_{i}, y_{i+2})
	.
\end{equation}

As $ F $ is an isometry
from
$ \ti{\Delta_{j}} $,
to $ \Delta_{j} $
it is enough to show that
for each
$ z=F(\ti{z})
\in  \Delta_{i} $ et
$ z'=F(\ti{z'})
\in  \Delta_{i+1} $
the distance $ \abs{z z'} $
in $ X $ 
equals the distance
$ \abs{\ti{z} \ti{z'}}$
in
$ \ti{\Delta_{i}}
\cup
\ti{\Delta_{i+1}} $.

First we compute
$\angle_{x}
(z, z')$.
We have:
\begin{align}
	\angle_{x}
	(y_{i}, y_{i+2})
	& \le
	\angle_{x}
	(y_{i}, z)
	+
	\angle_{x}
	(z, z')
	+
	\angle_{x}
	(z', y_{i+2}) \\
	& \le
	\angle_{x}
	(y_{i}, z)
	+
	\angle_{x}
	(z,y_{i+1})
	+
	\angle_{x}
	(y_{i+1},z')
	+
	\angle_{x}
	(z', y_{i+2}) \\
	& =
	\angle_{x}
	(y_{i}, y_{i+1})
	+
	\angle_{x}
	(y_{i+1}, y_{i+2}) \\
	& =
	\angle_{x}
	(y_{i}, y_{i+2})
	,
\end{align}
where we used the fact that 
the triangles $ \Delta_{j} $
are isometric to
hyperbolic triangles
and so that
\begin{equation}\angle_{x}
	(y_{i}, z)
	+
	\angle_{x}
	(z,y_{i+1})
	=
	\angle_{x}
	(y_{i}, y_{i+1})
\end{equation}
and
\begin{equation}\angle_{x}
	(y_{i+1}, z')
	+
	\angle_{x}
	(z',y_{i+2})
	=
	\angle_{x}
	(y_{i+1}, y_{i+2})
	.
\end{equation}
Every intermediate inequality
is then an equality and so
$
\angle_{x}
(z, z')
=
\angle_{x}
(z,y_{i+1})
+
\angle_{x}
(y_{i+1},z')
$.

\begin{figure}[h]
	\centering
	\begin{tikzpicture}[scale=0.8]
	\coordinate (X) at (0,0);
	\coordinate (Y1) at (-2,2);
	\coordinate (Y2) at (0,6);
	\coordinate (Y3) at (2,3);

	\coordinate (Z) at (-1,3);
	\coordinate (Z') at (1,2.5);

	\draw (X)
	--
	(Y1);
	\draw (X)
	--
	(Y2);
	\draw (X)
	--
	(Y3);
	\draw (Y1)
	--
	(Y2);
	\draw (Y2)
	--
	(Y3);

	\draw[dashed] (X)
	--
	(Z);
	\draw[dashed] (X)
	--
	(Z');
	\draw[dashed] (Z)
	--
	(Z');

	\filldraw [black] (X)
	node[anchor=north]{$x$}
	circle (2pt);
	\filldraw [black] (Y1)
	node[anchor=east]{$y_{1}$}
	circle (2pt);
	\filldraw [black] (Y2)
	node[anchor=south]{$y_{2}$}
	circle (2pt);
	\filldraw [black] (Y3)
	node[anchor=west]{$y_{3}$}
	circle (2pt);
	\filldraw [black] (Z)
	node[anchor=east]{$z$}
	circle (2pt);
	\filldraw [black] (Z')
	node[anchor=west]{$z'$}
	circle (2pt);

\end{tikzpicture} 
	\caption{The triangles $ \Delta_{1} $ and $ \Delta_{2} $ }
	\label{fig:isom_2_trig}
\end{figure}
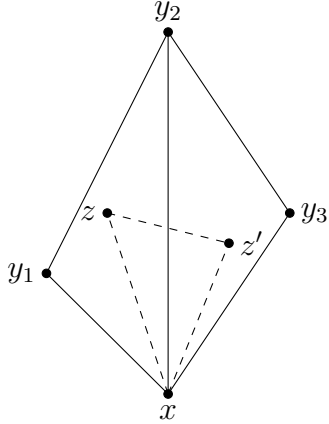

Now consider the triangle
$ [\ti{z}, \ti{x}, \ti{z'}] $
in $ \HH^{2} $.
% $ \ti{\Delta_{i}} \cup \ti{\Delta_{i+1}} $.
Its angle at $ \ti{x} $ is
$
\angle_{x}
(z,y_{i+1})
+
\angle_{x}
(y_{i+1},z')
=
\angle_{x}
(z, z')
$.
The comparison triangle
$ \ti{\Delta} $
of $ (z, x, z') $ is an 
hyperbolic triangle
whose 
adjacents sides at $ x $ 
are of the same sizes
as the one of
$ [\ti{z}, \ti{x}, \ti{z'}] $
at $ \ti{x} $
and whose angle at $ \ti{x} $
is greater or equal than
$
\angle_{x}
(z, z')
$.
As the opposite side of an
hyperbolic triangle is an 
increasing function of the
angle
we have
$ \abs{\ti{z} \ti{z'}} \le \abs{z z'} $
and so
$ \abs{\ti{z} \ti{z'}} = \abs{z z'} $.
We conclude that $ F $ is an
isometry from
$ \ti{\Delta_{i}} \cup \ti{\Delta_{i+1}} $
to
$ \Delta_{i} \cup \Delta_{i+1} $.

We now show what was assumed
until then, that
\begin{equation}
	\label{eq:eq_cons_angles}
	\alpha_{i} + \alpha_{i+1}
	=
	\angle_{x}
	(y_{i}, y_{i+2})
	.
\end{equation}
The strategy is to interpret this equality
in the space of directions at $ x $.
Let $ p_{i} = [x y_{i}] \in \Sigma_{x} X $
the image of $ y_{i} $ in the space of directions
at $ x $.
By definition of the distance in the space
of directions, the inequalities we want to prove
are
\begin{equation}
	\abs{p_{i} p_{i+1}}
	+
	\abs{p_{i+1} p_{i+2}}
	=
	\abs{p_{i} p_{i+2}}
	,
\end{equation}
that is the points $ p_{i} $, $ p_{i+1} $
and $ p_{i+2} $ are aligned on a geodesic.
Another way to say that is to considere the
geodesic polygon
$ P = [p_1, \dots, p_{N} ]$.
The equalities we want to prove are equivalent
to the fact that $ P $ is a local geodesic.
That's what we are going to prove.

We argue by contradiction
and suppose that $ P $ is not
a local geodesic, that is
we have
$
\abs{p_{1} p_{2}}
+
\abs{p_{2} p_{3}}
<
\abs{p_{1} p_{3}}
$
(up to renaming).
We say that
$ P $ has an angle at $ p_{2} $.
To obtain a contradiction, we will
use a comparison polygon $ \ti{P} $
in $ \Sp^{2} $. The harmonicity
of the triangulation will force
this polygon to be a great circle,
but the fact that $ P $ has an angle
will force $ \ti{P} $ to also have an
angle, which is absurd.

\subsubsection{Construction of the comparison polygon}

To construct the comparison polygon,
we want to use Reshetnyak's Majorization
theorem.
By hypothesis, $ P $ is a geodesic polygon
with perimeter $ 2 \pi $ so we can't
use the theorem directly.
Using the fact that it has an angle,
we will deform it slightly to obtain
a polygon of perimeter less than $ 2 \pi $.

Let $ T $ be the triangle
$ [p_{1} p_{2} p_{3}] $.
We fix a small $ \varepsilon > 0 $.
Let $ q_{1,\varepsilon} $
be the point on $ [p_{1} p_{2}] $
at distance $ \varepsilon $
from $ p_{2} $,
$ q_{3,\varepsilon} $
the point on $ [p_{3} p_{2}] $
at distance $ \varepsilon $
from $ p_{2} $.
and $ p_{2, \varepsilon} $
the middle point of
$ [q_{1, \varepsilon} q_{2, \varepsilon}] $.
We denote $ T_{\varepsilon} $
the triangle
$ [p_{1} p_{2,\varepsilon} p_{3}] $.

\begin{figure}[h]
	\centering
	\begin{tikzpicture}[scale=0.8]
	\coordinate (P1) at (0,0);
	\coordinate (P2) at (5, 5);
	\coordinate (P3) at (10, 0);

	\coordinate (Q1e) at ($(P2) !0.3! (P1)$);
	\coordinate (Q3e) at ($(P2) !0.3! (P3)$);

	\coordinate (P2e) at ($(Q1e) !0.5! (Q3e)$);

	\draw[dashed] (P1)
	-- 
	(P2);
	\draw[dashed](P2)
	-- 
	(P3);

	\draw[draw=none] (Q1e)
	-- node[above left]{$\varepsilon$}
	(P2);

	\draw[draw=none] (Q3e)
	-- node[above right]{$\varepsilon$}
	(P2);

	\draw (P1)
	--
	(P2e);
	\draw (P3)
	--
	(P2e);

	\draw[dashed] (Q1e)
	--
	(Q3e);

	\filldraw [black] (P1)
	node[anchor=east]{$p_{1}$}
	circle (2pt);
	\filldraw [black] (P2)
	node[anchor=south]{$p_{2}$}
	circle (2pt);
	\filldraw [black] (P3)
	node[anchor=west]{$p_{3}$}
	circle (2pt);

	\filldraw [black] (Q1e)
	node[anchor=east]{$q_{1,\varepsilon}$}
	circle (2pt);
	\filldraw [black] (Q3e)
	node[anchor=west]{$q_{3, \varepsilon}$}
	circle (2pt);

	\filldraw [black] (P2e)
	node[anchor=south]{$p_{2, \varepsilon}$}
	circle (2pt);

\end{tikzpicture} 
	\caption{The triangle $ T_{\varepsilon} $ }
	\label{fig:p_eps}
\end{figure}
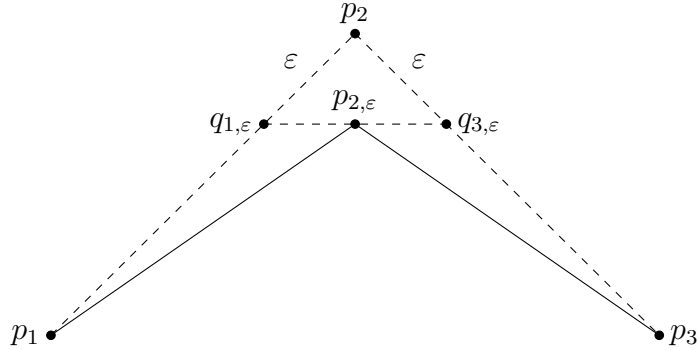

By the first variation's formula we
have
\begin{equation}
	\abs{
		q_{1, \varepsilon}
		q_{3, \varepsilon}
	}
	\sim
	\varepsilon
	\sin (\alpha)
	,
\end{equation}
when $ \varepsilon $ goes
to $ 0 $,
where $ \alpha $ is the angle
$ \angle_{p_{2} } (p_{1},p_{3}) $
(in $ \Sigma_{x} X $ ),
which is less than $ \pi $
because we assumed that $ P $
has an angle at $ p_{2} $.
We have, for
$ \varepsilon $ small enough,
\begin{align}
	\abs{p_{1} p_{2,\varepsilon}}
	& \le
	\abs{p_{1} q_{1, \varepsilon}}
	+
	\abs{q_{1, \varepsilon} p_{2,\varepsilon} }
	\\
	& \le
	\abs{p_{1} p_{2}} - \varepsilon
	+ \frac{2}{3} \sin (\alpha) \varepsilon
	\\
	& =
	\abs{p_{1} p_{2}}
	- (1 - \frac{2}{3} \sin (\alpha)) \varepsilon
	\\
	& <
	\abs{p_{1} p_{2}}
	,
\end{align} 
and,
\begin{equation}
	\abs{p_{3} p_{2, \varepsilon}}
	\le
	\abs{p_{3} p_{2}}
	-
	(1 - \frac{2}{3} \sin(\alpha)) \varepsilon
	<
	\abs{p_{3} p_{2}}
	.
\end{equation} 
Also,
$ \abs{p_{2} p_{2, \varepsilon}}
\le 2 \varepsilon $
so $ p_{2, \varepsilon} \to p_{2}  $
as $ \varepsilon $ goes to $ 0 $.
So the perimeter of $ T_{\varepsilon} $
goes to the one of $ T $
as $\varepsilon$ goes to $ 0 $,
while being smaller.

Let $ P_{\varepsilon} $ be the polygon
$ P $ with $ T $ replaced by $ T_{\varepsilon} $,
that is the polygon
$ [p_{1}, p_{2, \varepsilon}, p_{3}, \dots, p_{N}] $.
It is an approximation of $ P $ of perimeter
less than $ 2 \pi $.
We apply the Reshetnyak's Majorization theorem to
$ P_{\varepsilon} $.
We get
\begin{itemize}
	\item 	a polygon
		$ \ti{P_{\varepsilon}}
		= [\ti{p_{1, \varepsilon}},
		\ti{p_{2, \varepsilon}},
		\dots, \ti{p_{N, \varepsilon}}]$
		in
		$ \Sp^{2} $,
		whose sides' length are the same
		as $ P_{\varepsilon} $'s,
	\item 	a convex $ D_{\varepsilon} $
		in $ \Sp^{2} $ 
		which is bounded by
		$ \ti{P_{\varepsilon}} $,
	\item 	the majorizer
		$ M_{\varepsilon}
		:D_{\varepsilon} \to \Sigma_{x} X $,
		a $ 1 $-Lipschitz map which is
		length-preserving on $ \ti{P_{\varepsilon}} $ 
		and which maps $ \ti{p_{i,\varepsilon}} $
		to $ p_{i} $, except for $\ti{p_{2, \varepsilon}}$
		which is mapped to $ p_{2, \varepsilon} $.
\end{itemize}
Because $ \ti{P_{\varepsilon}} $ is of perimeter
less than $ 2 \pi $, its radius is less than
$ \frac{\pi}{2} $.
We normalize the $ D_{\varepsilon} $ using an isometry
so that that their centers coincide as $ \varepsilon $
vary.
By compactness of $ \Sp^{2} $ and properness
of $ \Sigma_{x} X $, we can extract limits
as $ \varepsilon $ goes to $ 0 $ and obtain:
\begin{itemize}
	\item 	a polygon
		$ \ti{P}
		= [\ti{p_{1}},
		\ti{p_{2}},
		\dots, \ti{p_{N}}]$
		in
		$ \Sp^{2} $,
		whose sides' length are the same
		as $ P $'s,
	\item 	a convex $ D $
		in $ \Sp^{2} $ 
		which is bounded by
		$ \ti{P} $,
	\item 	the majorizer
		$ M:D \to \Sigma_{x} X $,
		a $ 1 $-Lipschitz map which is
		length-preserving on $ \ti{P} $ 
		and which maps $ \ti{p_{i}} $
		to $ p_{i} $.
\end{itemize}

\subsubsection{The comparison polygon lies on a great circle}

The perimeter of $ \ti{P} $ is $ 2 \pi $
so it is contained in an half-hemisphere
centered at a point $ \ti{q} $, that is
for all $ \ti{p} \in \ti{P} $ we have
$ \abs{\ti{p} \ti{q}} \le \frac{\pi}{2} $.
Applying the map $ M $ we have
$ \abs{p_{i} q} \le \frac{\pi}{2} $
(where $ q = M (\ti{q}) $),
that is
$
\angle_{x} (y_{i}, \gamma)
\le
\frac{\pi}{2}
$,
where $ \gamma $ is a geodesic
representative of $ q $.
Now if we apply the harmonic critical inequality
\ref{prop:critic}
to $ \gamma $ we obtain:
\begin{equation}
	\sum_{i}
	\abs{x y_{i}}
	\cos
	\angle_{x} (y_{i}, \gamma)
	\le
	0
	,
\end{equation}
but the sum is also positive because
$
\angle_{x} (y_{i}, \gamma)
\le
\frac{\pi}{2}
$,
so each cosine is null and
$
\angle_{x} (y_{i}, \gamma)
=
\frac{\pi}{2}
$.
This implies that
$ \abs{\ti{p_{i}} \ti{q}}=\frac{\pi}{2}$,
and so $ \ti{P}$ lies on the great circle
centered at $ \ti{q} $.

\subsubsection{The comparison polygon has an angle}
We present a slightly modified proof of the
Reshetnyak's Majorization
theorem, which gives the fact that if
the polygon has an angle, the comparison polygon
can be constructed with an angle.
We follow closely
\cite[Th. 8.12.14]{alexanderAlexandrovGeometryPreliminary2019}.
Let's recall the statement of the theorem:

\begin{theorem}
	Let
	$ P = [p_{1} \dots p_{N}] $
	be a closed polygon
	($ p_{1} = p_{N} $)
	with geodesic side in
	a $ \cat{\kappa} $ space $ Y $
	and perimeter $ < 2 D_{\kappa} $.
	Then there exists
	\begin{itemize}
		\item 	a polygon
			$ \ti{P}
			= [\ti{p_{1}}
			\dots \ti{p_{N}}]$
			in
			$ \MM_{\kappa} $,
			whose sides' length are the same
			as $ P $'s,
		\item 	a convex $ D $
			in $ \MM_{\kappa} $ 
			which is bounded by
			$ \ti{P} $,
		\item 	a majorizer
			$ M:D \to Y $,
			a $ 1 $-Lipschitz map which is
			length-preserving on $ \partial \ti{P} $ 
			and which maps $ \ti{p_{i}} $
			to $ p_{i} $.
	\end{itemize}
\end{theorem}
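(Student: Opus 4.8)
The plan is to prove this by induction on the number of vertices of $P$, re-running the proof of the majorization theorem in \cite[8.12.14]{alexanderAlexandrovGeometryPreliminary2019} while keeping track of the interior angle of the comparison polygon at one distinguished vertex, so as to obtain the refinement needed in Section~\ref{sec:rigidity}. \textbf{Base case: triangles.} When $P=[p_{1}p_{2}p_{3}]$ the comparison polygon is forced to be the comparison triangle $\ti{T}=[\ti{p_{1}}\ti{p_{2}}\ti{p_{3}}]$ in $\MM_{\kappa}$, and $D$ is the solid region it bounds. I would build the majorizer as a ruled map: every point of $\ti{T}$ lies on a unique segment $[\ti{p_{1}}w]$ with $w\in[\ti{p_{2}}\ti{p_{3}}]$; letting $\bar w\in[p_{2}p_{3}]\subset Y$ be the corresponding point, send $[\ti{p_{1}}w]$ onto the geodesic $[p_{1}\bar w]\subset Y$ by the affine reparametrisation of ratio $\abs{p_{1}\bar w}/\abs{\ti{p_{1}}w}\le 1$, the inequality being the $\cat{\kappa}$ inequality for $[p_{1}p_{2}p_{3}]$. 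This map is length-preserving on the boundary triangle, sends vertices to the corresponding $p_{i}$, and is $1$-Lipschitz, the last point being the standard consequence of the $\cat{\kappa}$ inequality for the sub-triangles $[p_{1}\bar w\bar w']$ together with monotonicity of the comparison angle at $\ti{p_{1}}$. Note that if the path $[p_{1}p_{2}]\cup[p_{2}p_{3}]$ is genuinely bent at $p_{2}$ --- the situation of Section~\ref{sec:rigidity} --- then $\abs{p_{1}p_{3}}<\abs{p_{1}p_{2}}+\abs{p_{2}p_{3}}$, so $\ti{T}$ is non-degenerate and its interior angle at $\ti{p_{2}}$ lies in $(0,\pi)$.

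\textbf{Inductive step.} For a polygon with at least four vertices, fix a geodesic $[p_{1}p_{3}]$ in $Y$. Comparing perimeters by going around $P$ both ways shows that the triangle $[p_{1}p_{2}p_{3}]$ and the polygon $[p_{1}p_{3}p_{4}\cdots p_{N}]$, which has one fewer vertex, both have perimeter $<2D_{\kappa}$. The base case applied to the triangle gives its solid comparison triangle $\ti{T}$ with a majorizer $M''$; the induction hypothesis applied to the shorter polygon gives a convex $D'\subset\MM_{\kappa}$, bounded by a polygon with the prescribed side lengths, and a $1$-Lipschitz $M':D'\to Y$. Both $\ti{T}$ and $D'$ carry a boundary segment of length $\abs{p_{1}p_{3}}$ on which $M''$ and $M'$ restrict to the same isometry onto $[p_{1}p_{3}]$, so I glue $\ti{T}$ and $D'$ along this segment. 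By Reshetnyak's gluing theorem the result $\hat{D}$ is a $\cat{\kappa}$ disk, with polygonal boundary of the correct side lengths and perimeter $<2D_{\kappa}$, and $\hat{M}:=M'\cup M''$ is well defined, length-preserving on the boundary, and $1$-Lipschitz: for $a\in\ti{T}$ and $b\in D'$ one bounds $\abs{\hat{M}(a)\,\hat{M}(b)}$ by routing through the point of the gluing segment realising the distance from $a$ to $b$ in $\hat{D}$.

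\textbf{Developing into $\MM_{\kappa}$, and the angle.} It remains to replace the abstract disk $\hat{D}$ by a genuine convex subset of $\MM_{\kappa}$. I would develop $\hat{D}$ into $\MM_{\kappa}$ by placing $D'$ and then $\ti{T}$ against it along the shared segment; whenever the resulting planar figure fails to be convex --- i.e. has interior angle $>\pi$ at $\ti{p_{1}}$ or at $\ti{p_{3}}$ --- apply Alexandrov's lemma \cite{alexanderAlexandrovGeometryPreliminary2019} to merge the two triangles meeting along the offending edge into a single comparison triangle, producing a $1$-Lipschitz map from the new figure onto the old. Iterating strictly decreases the number of triangles, hence terminates, and one checks that the side lengths and the bound $<2D_{\kappa}$ on the perimeter are preserved throughout. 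This yields the convex $D\subset\MM_{\kappa}$, bounded by a polygon $\ti{P}$ with the prescribed side lengths, together with a $1$-Lipschitz $D\to\hat{D}$; composing with $\hat{M}$ gives the desired majorizer $M$. For the refinement, observe that Alexandrov's lemma only ever straightens the figure at the shared vertex of a merged pair of triangles, which is $\ti{p_{1}}$ or $\ti{p_{3}}$, never at $\ti{p_{2}}$, which stays incident to the single triangle $\ti{T}$ throughout; hence if $[p_{1}p_{2}]\cup[p_{2}p_{3}]$ is genuinely bent at $p_{2}$, so that $\ti{T}$ is bent at $\ti{p_{2}}$, the final polygon $\ti{P}$ still has interior angle $<\pi$ at $\ti{p_{2}}$, i.e. ``has an angle'' there.

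\textbf{Main obstacle.} The delicate part is this last step: showing that $\hat{D}$ genuinely develops onto a polygonal region of $\MM_{\kappa}$, that Alexandrov's lemma can be applied repeatedly until that region is convex without violating the perimeter bound $<2D_{\kappa}$, and --- for the application --- that the distinguished vertex $\ti{p_{2}}$ is never flattened in the process. The remaining ingredients (the perimeter bookkeeping and the $1$-Lipschitz estimates) are routine once these structural facts are in place.
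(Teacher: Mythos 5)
Your proposal is correct and shares its skeleton with the paper's argument (which follows \cite[8.12.14]{alexanderAlexandrovGeometryPreliminary2019}): induction on $N$, splitting $P$ into $T=[p_{1}p_{2}p_{3}]$ and $Q=[p_{1}p_{3}\dots p_{N}]$, majorizing the two pieces, and gluing by Reshetnyak's gluing theorem. The genuine difference is the convexification step. The paper never develops the glued disk into $\MM_{\kappa}$: when $\ti{T}\cup\ti{Q}$ fails to be convex, the total angle at $\ti{p_{1}}$ or $\ti{p_{3}}$ in the abstract glued $\cat{\kappa}$ space is $\ge\pi$, so that vertex becomes inessential, the boundary is an $(N-1)$-gon of the same perimeter inside the glued space, and the induction hypothesis is invoked again there. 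You instead develop into $\MM_{\kappa}$ and straighten reflex vertices by iterated Alexandrov-lemma merges. Both routes are standard; yours pays off precisely in the refinement needed for Section~\ref{sec:rigidity}, since $\ti{p_{2}}$ visibly remains a vertex of one explicit non-degenerate triangle throughout, whereas the paper has to track the ``depth'' of its recursion to identify which comparison triangle ends up carrying $\ti{p_{2}}$. Three details should be tightened. (i) ``The two triangles meeting along the offending edge'' presupposes first cutting the triangle $[\ti{p_{1}}\ti{p_{3}}\ti{p_{4}}]$ (or $[\ti{p_{N-1}}\ti{p_{1}}\ti{p_{3}}]$) off the convex $D'$; after the merge the new triangle is reglued to the remaining convex region along $[\ti{p_{1}}\ti{p_{4}}]$, and termination holds because the convex part loses one vertex per step. (ii) Alexandrov's lemma only compares distances between marked points; to obtain the $1$-Lipschitz map from the merged triangle onto the union of the two old ones, apply your triangle base case to the Reshetnyak gluing of those two triangles, whose boundary is a geodesic triangle precisely because the angle at the straightened vertex is $\ge\pi$ --- this is in fact the same observation the paper uses when it declares $\ti{T}\cup\tau$ to be a triangle for the length metric. (iii) The claim that $\ti{p_{2}}$ is never flattened requires the estimate $\abs{\ti{p_{1}}\,\ti{p_{4}}}\le\abs{p_{1}p_{3}}+\abs{p_{3}p_{4}}<\abs{p_{1}p_{2}}+\abs{p_{2}p_{3}}+\abs{p_{3}p_{4}}$, and its analogues at later steps, which show that each merged triangle satisfies a strict triangle inequality at the vertex descending from $\ti{p_{2}}$. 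With these additions the argument is complete and, for the purposes of Section~\ref{sec:rigidity}, somewhat cleaner than the paper's.
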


First we outline
the un-modified proof
of the theorem.
The proof is induction of $ N $, the number
of vertices of the polygon $ P $.
For $ N=3 $, $ P $ is a triangle
and there is a construction,
the details of which are not
important to us,
which gives
a majorizer with $ D $ which is taken
to be the convex hull of the comparison triangle
in $ \MM_{\kappa} $ and which extends the
natural comparison map.

Let's suppose that the property is true
for all number less than $ N $.
We consider the polygon $ P $ as the union
of $ T =[p_{1} p_{2} p_{3}] $
and $ Q = [p_{1} p_{3} \dots p_{N} ] $ 
glued along the side $ [p_{1} p_{3} ] $.
By the induction hypothesis,
we can majorize $ T $ and $ Q $ by
polygons $ \ti{T} $ and $ \ti{Q} $
in $ \MM_{\kappa} $.
If the gluing of $ \ti{T} $ and
$ \ti{Q} $ along
$ [\ti{p_{1} } \ti{p_{3}}] $ is a convex
subset of $ \MM_{\kappa} $, we're done.
Otherwise, consider this gluing as
new as metric space, with the induced
length-metric.
By Reshetnyak's gluing theorem, this
space is $ \cat{\kappa} $.
Because the gluing of $ \ti{T} $ and
$ \ti{Q} $ along
$ [\ti{p_{1} } \ti{p_{3}}] $ is not convex,
the gluing of $ \ti{T} $ and
$ \tau $ along 
$ [\ti{p_{1} } \ti{p_{3}}] $ is not convex
with respect to the ambient metric of $ \MM_{\kappa} $,
where
$ \tau $ is either the triangle
$ [\ti{p_{1}} \ti{p_{3}} \ti{p_{4}}] $
or the triangle
$ [\ti{p_{N}} \ti{p_{1}} \ti{p_{3}}] $
So in the the length metric of the gluing
$ \ti{T} \cup \ti{Q} $, 	
the polygon $ \ti{ T } \cup \tau $
is a
triangle. Consequently,
the polygon 
$ \ti{T} \cup \ti{Q} $,
is a $ (N-1) $-gon, and by induction
hypothesis we can majorize $ P $.

Now, let's suppose that $ P $ has
an angle at $ p_{2} $, that is the
triangle $ T = [p_{1} p_{2} p_{3} ]$ 
is not flat.
We re-follow the proof to see what
is the corresponding triangle in
$ \ti{P} $.
As before, we majorize $ T $ and $ Q $
by $ \ti{T} $ and $ \ti{Q} $.
If the gluing $ \ti{T} \cup \ti{Q} $ 	
is convex, the construction is finished
and the triangle corresponding to $ T $
is $ \ti{T} $, its comparison triangle.
As $ T $ is not flat, its comparison triangle
is also not flat, and $ \ti{P} $ has an angle
at $ \ti{p_{2} } $.
If the gluing is not convex, then
$ \ti{T} \cup \tau_{1} $ is a triangle
in the length-metric of $ \ti{T} \cup \ti{Q}$
(where we denote by $ \tau_{1} $
what we denoted by $ \tau $ earlier)
and we use the induction hypothesis.
Let's call \emph{depth} of the induction,
and denote it by $ d $,
the number of times we have to use the
induction hypothesis until we obtain a convex gluing.
Then it is clear that the triangle corresponding
to $ T $ in the final polygon we obtain is
the comparison triangle of
$ \ti{T} \cup \tau_{1} \cup \dots \cup \tau_{d} $
(considering it a triangle for the length-metric
of the gluing),
where the $ \tau_{i}$ are obtained as
$ \tau $ in the previous paragraph,
and so are triangles whose
vertices are vertices of $ Q $.
As this triangle is not flat, its comparison triangle
is also not flat and $ \ti{P} $ has an angle.
We also note that without knowing the depth and
which comparison triangle correspond to $ T $,
there is a finite number of possibilites
of such triangles.

Now we show that in our case, $ \ti{P} $ has an
angle. The theorem does not apply directly
because its perimeter is $ 2 \pi $.
We use the construction $ P_{\varepsilon} $
of the previous paragraph.
It is a polygon of length less than $ 2 \pi $
which has an angle, so applying the modified
version of the theorem, we can construct
a majorizer whose boundary $ \ti{P_{\varepsilon}} $ 
has an angle.
It remains to show that the limit $ \ti{P} $
when $ \varepsilon $ goes to $ 0 $
still as an angle.
According to the discussion at the end of
the previous paragraph,
the triangle corresponding to the vertex
where $ \ti{P_{\varepsilon}} $ has an angle
is the comparison triangle
of a triangle of the form
$ \ti{T_{\varepsilon} }
\cup \tau_{1} \cup \dots \cup \tau_{d_{\varepsilon} } $
where $ d_{\varepsilon} $ is the depth for $ P_{\varepsilon} $.
The $ \tau_{i} $ that appears are sub-triangle of the
polygon $ Q = [p_{1} p_{3} \dots p_{N}] $,
which does not depend of $ \varepsilon $.
Because $ T_{\varepsilon}  $ converges to $ T $,
all of these triangles converges to a non-flat
triangle and
the angle of $ \ti{P_{\varepsilon}} $ is bounded
away from $ 0 $.
We can conclude that $ \ti{P} $ has an angle.

\textbf{Conclusion.}
Assuming that $ P $ is not a local geodesic
we have proved that $ \ti{P} $ is a great circle
and that $ \ti{P} $ has an angle.
This is absurd, and we conclude that
$ P $ is a local geodesic, and that
the equalities \ref{eq:eq_cons_angles} hold.
Finally, $ F $ induces an isometry
on the reunion of two adjancent
triangles $ \Delta_{i} \cup \Delta_{i+1} $
and then $ F $ is a (global)
isometric embedding.

\subsection{Non-proper X}
\label{sub:non_proper}

We explain how to adjust
the argument when $ X $
is not proper.
The only place where we used
the properness of $ X $
is to construct a majorizer
of $ P $ by taking a limit
of majorizer of $ P_{\varepsilon} $.
If $ X $ is not proper,
a limit map may not exist
in $ X $, but it exists in
the \emph{ultrapower}
$ X^{\omega} $ 

We define very briefly
what is the ultrapower $ X^{\omega} $,
see \cite[Ch. 3]{alexanderAlexandrovGeometryPreliminary2019}.
A selective ultrafilter $ \omega $ on $ \N $
allows to define the $ \omega $-limit
(in $ \R \cup \left\{ -\infty, +\infty \right\} $)
of any sequence of real number $ (x_{n}) \in \R^{\N} $.
We denote this $ \omega $-limit by
$ \lim_{n \to \omega} x_{n} $;
intuively, it is the choice of a convergent subsequence.
We equip the product $ X^{\N} $ with the pseudo-metric
\begin{equation}
	\abs{(x_{n}) (y_{n})} = \lim_{n \to \omega} \abs{x_{n} y_{n}}
	,
\end{equation}
and the associated metric space
is denoted by $ X^{\omega} $.
The point in $ X^{\omega} $
associated to a sequence $ (x_{n}) $
is denoted by $ \lim_{n \to \omega} x_{n} $.
There is an isometric embedding
$ X \to X^{\omega} $
which maps $ x $ to
$ \lim_{n \to \omega} x $.
We consider $ X $ as a isometricaly
embedded subspace of $ X^{\omega} $
and any sequence $ (x_{n}) $ of $ X $
has an $ \omega $-limit
$ \lim_{n \to \omega} x_{n} $
in $ X^{\omega} $,
which is a limit of a convergent subsequence.
The ultraproduct of a $ \cat{\kappa} $
space is a $ \cat{\kappa} $ space.

Going back to the argument;
the sequence of geodesic polygons
$ P_{\varepsilon} $
has a $ \omega $-limit $ P $
in $ X^{\omega} $
when $ \varepsilon $ goes to 0.
We can proceed with the same argument,
but in $ X^{\omega} $ instead of $ X $,
and still reach
a contradiction.

% \bibliographystyle{alpha}
% \bibliography{../these/references}

\printbibliography
% \bibliography{ref}
\end{document}